\newcommand{\R}{\mathbb{R}}
\def\Power #1 { \powerset(#1) }
\def\Bidom #1 { {\mathfrak P} (#1) }
\newcommand{\Cov}{\mathop{\rm Cov}}
\newcommand{\Inc}{\mathop{\rm Inc}}
\newcommand{\inc}{\mathop{\rm Inc}}
\newcommand{\cf}{\mathop{\rm cf}}
\newtheorem{definition}{{\bf Definition}}[section]
\newtheorem{theorem}[definition]{{\bf Theorem}}
\newtheorem{corollary}[definition]{{\bf Corollary}}
\newtheorem{lemma}[definition]{\noindent {\bf Lemma}}
\newtheorem{claim}[definition]{\noindent {\bf Claim}}
\newtheorem{problem}[definition]{\noindent {\bf Problem}}
\newtheorem{fact}[definition]{\noindent {\bf Fact}}
\def\proofref #1 {{\noindent  {\bf Proof} (#1).}\ }
\def\endproof{\hfill {\kern 6pt\penalty 500
\raise -0pt\hbox{\vrule \vbox to5pt {\hrule width 5pt
\vfill\hrule}\vrule}}}
\def\centerpicture #1 by #2 (#3){\leavevmode
        \vbox to #2{
        \hrule width #1 height 0pt depth 0pt
        \vfill
        \special{pictfile #3}}}
\title[The  chain covering number]{The chain covering number of  a poset with no infinite antichains}
\email{uriabr@gmail.com}
\author[U.Abraham]{Uri Abraham} \address{Math \& CS Dept.,
Ben-Gurion University,
Beer-Sheva, 84105 Israel.}
\thanks{*This research benefited from the Abkasian seminar run throught the Covid epidemy and  was supported by  LABEX MILYON (ANR-10-LABX-0070) of Universit\'e de Lyon within the program ``Investissements d'Avenir (ANR-11-IDEX-0007)" operated by the French National Research Agency (ANR)}
\email{abraham@math.bgu.ac.il, uriabr@gmail.com}
\author[M.Pouzet]{Maurice Pouzet} \address{ICJ, Math\'ematiques, Universit\'e Claude-Bernard Lyon1, 43 bd. 11 Novembre 1918, 69622 Villeurbanne Cedex, France and Mathematics \& Statistics Department, University of Calgary, Calgary, Alberta, Canada T2N 1N4}
 \email{pouzet@univ-lyon1.fr }
\date{\today}
\keywords{posets, graphs, well quasi ordering, better quasi ordering}
\subjclass[2000]{Partially ordered sets and lattices (06A, 06B)}
\begin{document}

\begin{abstract} The covering number $\Cov(P)$ of a poset $P$ is the least number of chains needed to cover $P$. For a cardinal $\nu$,  we give a   list  of posets  of cardinality and covering number $\nu$ such that  for every poset $P$ with no infinite antichain, $\Cov(P)\geq \nu$ if and only if $P$ embeds a member of the list. This list has two elements if $\nu$ is a successor cardinal, namely $[\nu]^2$ and its dual,  and four elements if $\nu$ is a limit cardinal with $\cf(\nu)$  weakly compact. For $\nu= \aleph_1$, a list was given by the first author;  his construction was extended by F. Dorais to every infinite successor cardinal $\nu$.  \end{abstract}

\maketitle

%\begin{altabstract}
% Le nombre de recouvrement par cha\^{\i}nes d'un ensemble ordonn\'e $P$, not\'e $\Cov(P)$, est le plus petit nombre de cha\^{\i}nes n\'ecessaires pour recouvrir $P$. Pour un cardinal donn\'e $\nu$, on donne une liste de posets $Q$  de nombre de recouvrement par cha\^{\i}nes $\nu$ telle que pour tout poset $P$ sans anticha\^{\i}ne infinie, $\Cov(P)\geq \nu$ si et seulement si $P$ contient  une copie d'un membre de la liste. Cette liste est constitu\'ee de posets de cardinal $\nu$, elle a deux \'{e}l\'{e}ments si $\nu$ est un cardinal successeur, \`{a} savoir $[\nu]^2$ et son dual, et quatre \'{e}l\'{e}ments si $\nu$ est un cardinal limite avec $\cf(\nu)$ faiblement compact. Pour $\nu= \aleph_1$, une liste a \'{e}t\'{e} donn\'{e}e par le premier auteur; sa construction a \'{e}t\'{e} \'{e}tendue par F. Dorais \`a tout cardinal successeur infini $\nu$.
%\end{altabstract}
%
%\begin{document}
%\maketitle
%--------------------------
\section{Introduction}
%--------------------------

Let $P:= (V, \leq)$ be an ordered set (poset). If $X$ is a subset of $V$, the poset \emph{induced}  by $P$ on $X$ is $P_{\restriction X}:= (X, \leq \cap X\times X)$.
 The \emph{chain covering number}  $\Cov(P)$ of $P$ is the least number of chains needed to cover $V$. 
 
 A famous result due to R.P. Dilworth \cite{dilworth} asserts that for every poset $P$, $\Cov(P)\geq n$, where $n$ is an integer,  if and only if $P$ contains an antichain of cardinality $n$. 
 
 If $n$ is replaced by $\aleph_0$, then $\Cov(P)\geq \aleph_{0}$ if and only if $P$ contains either an infinite antichain or the lexicographical sum of antichains of finite unbounded cardinality indexed by the chain of the integers or its reverse (see Lemma 4.1,  p. 491 of \cite{milner-pouzet2}).   
  
 It was conjectured by F. Galvin (1968) that   $\Cov(P)\geq \aleph_{1}$ if and only if $V$ contains a subset $X$ of cardinality $\aleph_1$ such that $\Cov (P_{\restriction X})= \aleph_1$. It is not known if the truth of this conjecture is consistent with ZFC. 
 The first author proved in \cite{abraham}  that this conjecture holds with the additional hypothesis that $P$ has no infinite antichain. He introduced \emph{Perles posets} having  cardinality  and chain covering number $\aleph_1$ and proved that if a poset $P$ has  no infinite antichain,  $\Cov(P)\geq \aleph_{1}$ if and only if some poset induced by $P$  is  isomorphic to a Perles poset. This result was extended by F. Dorais \cite{dorais2}    
 to infinite successors cardinal.

 It was conjectured by R. Rado \cite{rado} that Galvin's conjecture holds for interval orders (a poset $P$ is an \emph{interval order} if its vertices can be represented by intervals of a chain, a vertex $x$ being strictly before $y$ in the ordering if the interval associated to $x$ is to the left of the interval associated to $y$, these intervals being disjoint).  Todor\v{c}evi\'c \cite {todorcevic1} proved  that Rado's conjecture is consistent with the existence of a supercompact cardinal. In \cite{todorcevic2, todorcevic3}, Todor\v{c}evi\'c
proved that large cardinals are indeed necessary to establish the consistency of Rado’s
conjecture. He also proved that the truth of Rado's conjecture implies that Galvin's conjecture holds for $2$-dimensional posets (see \cite{dorais2}).

 In this paper, we prove Galvin's conjecture for posets with no infinite antichain and for each cardinal $\nu$ we give an explicit list  of posets of cardinality and chain covering number $\nu$. 
 
 In order to present our result, we need few notations. A  poset $P$ is \emph{embeddable} in a poset $P'$, and we set $P\leq P'$,  if $P$ is isomorphic  to the poset induced by $P'$ on some subset. Two posets $P$ and $P'$ are \emph{equimorphic} if each is isomorphic to an induced subposet of the other. The  \emph{dual} of $P$ is the poset on the same domain equipped with   the opposite order, we denote it by $P^*$.  Clearly, if $P\leq P'$ then $\Cov(P)\leq\Cov(P')$, while $\Cov(P)=\Cov(P^*)$. 

Let $\nu$ be an infinite ordinal. We denote by $\cf (\nu)$ the cofinality of $\nu$. Let $[\nu]^2: =\{(\alpha, \beta): \alpha<\beta<\nu\}$. We order  that set by setting $(\alpha, \beta)\leq (\alpha', \beta')$ if $\alpha\leq \alpha'$ and $\beta\leq \beta'$. As a subset of the direct product $\nu\times \nu$, this poset  is the upper half of  $\nu\times \nu$ ordered componentwise.

Our main result asserts: 
 \begin{theorem}\label{thm:covering}
  Let $\nu$ be an uncountable cardinal and $P$ be a poset with no infinite antichain.
 
 \begin{enumerate}
 
 \item  If $\nu$ is a successor cardinal, i.e., $\nu= \kappa^+$, then   $\Cov(P)\geq \nu$  if and only if $P$ contains either a copy of $[\nu]^2$ or of its dual. 
  
  \item If $\nu$ is a limit cardinal then $\Cov(P)\geq \nu$  if and only if $P$ contains  a poset $Q$, or its dual, of the form $\sum_{\alpha\in C}Q_{\alpha}$, where $C$ is a chain of cardinality $\cf (\nu)$, $Q_{\alpha}:= [\kappa_{\alpha}^+]^2$  and $(\kappa_{\alpha}^+)_{\alpha\in C}$ is a  family of distinct cardinals, all less  than $\nu$,  such that  $\bigvee \{\kappa_{\alpha}^+: \alpha\in C\} = \nu$.  \end{enumerate}
\end{theorem}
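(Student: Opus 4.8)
The ``if'' direction rests on two computations. First, $\Cov([\kappa^+]^2)=\kappa^+$: the $\kappa^+$ columns $\{\alpha\}\times(\alpha,\kappa^+)$ are chains covering $[\kappa^+]^2$, giving $\leq\kappa^+$; for $\geq\kappa^+$, suppose $[\kappa^+]^2=\bigcup_{i<\kappa}C_i$ with each $C_i$ a chain, use regularity of $\kappa^+$ to pick for every $\alpha<\kappa^+$ an index $i(\alpha)<\kappa$ with $S_\alpha:=\{\beta:(\alpha,\beta)\in C_{i(\alpha)}\}$ of size $\kappa^+$, then (regularity again) fix $i^*$ and $A\in[\kappa^+]^{\kappa^+}$ with $i(\alpha)=i^*$ on $A$; for $\alpha<\alpha'$ in $A$ choose $\beta'\in S_{\alpha'}$ and then $\beta\in S_\alpha$ with $\beta>\beta'$, so $(\alpha,\beta),(\alpha',\beta')\in C_{i^*}$ are incomparable, a contradiction. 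Second, for a lexicographic sum over a chain $C$, every chain of $\sum_{\alpha\in C}Q_\alpha$ meets each $Q_\alpha$ in a chain, so $\Cov(\sum_{\alpha\in C}Q_\alpha)\geq\sup_\alpha\Cov(Q_\alpha)$; with $Q_\alpha=[\kappa_\alpha^+]^2$ this supremum is $\sup_\alpha\kappa_\alpha^+=\nu$ by the first computation and the hypothesis, while such a sum has no infinite antichain (an antichain lies inside a single $Q_\alpha$, as $C$ is a chain) and cardinality $\nu$; hence $Q$, and dually $Q^*$ since $\Cov$ is self‑dual, is a legitimate witness, and $P\leq P'$ implies $\Cov(P)\leq\Cov(P')$.

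\textbf{Necessity, $\nu=\kappa^+$.} Assume $P$ has no infinite antichain and $\Cov(P)\geq\kappa^+$. The plan is to build inside $P$, by transfinite recursion of length $\kappa^+$, a copy of $[\kappa^+]^2$ or, according to which of two symmetric configurations the no‑infinite‑antichain hypothesis makes available, a copy of $([\kappa^+]^2)^*$. One maintains at stage $\delta<\kappa^+$ vertices $p_{\alpha,\beta}$ ($(\alpha,\beta)\in[\delta]^2$) inducing a copy of $[\delta]^2$ and, passing to $\delta+1$, adjoins a new ``column'' $\{p_{\alpha,\delta}:\alpha<\delta\}$ with the comparabilities prescribed by $[\delta+1]^2$. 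The engine is the elementary fact that discarding fewer than $\Cov(P)$ chains from $P$ does not lower $\Cov(P)$: at stage $\delta$ fewer than $\kappa^+$ vertices have been committed, so $\kappa$ auxiliary chains may be set aside while the residual poset still has covering number $\kappa^+$ and, having no infinite antichain, retains enough structure to furnish the next column with the right pattern. (Alternatively one may invoke Dorais's theorem that such a $P$ embeds a Perles‑type poset of covering number $\kappa^+$ and then verify that $[\kappa^+]^2$ or its dual embeds into that poset, which is more concrete since Perles posets are tree‑like.)

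\textbf{Necessity, $\nu$ limit.} Fix an increasing sequence of cardinals $\langle\kappa_\alpha:\alpha<\cf(\nu)\rangle$ cofinal in $\nu$ with each $\kappa_\alpha^+<\nu$ (possible since $\nu$, being a limit cardinal, is not a successor). For each $\alpha$ we have $\Cov(P)\geq\kappa_\alpha^+$, so by the successor case $P$ contains a copy of $[\kappa_\alpha^+]^2$ or of its dual; as $\cf(\nu)$ is regular, after thinning the sequence and possibly replacing $P$ by $P^*$ we may assume $P$ contains copies $R_\alpha\cong[\kappa_\alpha^+]^2$ for all $\alpha<\cf(\nu)$. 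It remains to reorganize a cofinal subfamily of the $R_\alpha$ into a lexicographic sum $\sum_{\alpha\in C}[\kappa_\alpha^+]^2$ over a single chain $C$ of cardinality $\cf(\nu)$: one works inside $D:=\bigcup_\alpha R_\alpha$, a poset with no infinite antichain, of cardinality $\nu$ and covering number $\nu$, and uses the structure theory of such posets (in the spirit of the Milner--Pouzet decomposition recalled in the introduction) to stack the blocks $R_\alpha$ along a chain while discarding overlaps. Here $[\nu]^2$ itself is \emph{not} available as a witness when $\nu$ is singular — its covering number need not be $\nu$ — which is why the lexicographic‑sum form is forced; the fine structure of the index chain $C$, and the sharper statement from the introduction that four canonical posets suffice when $\cf(\nu)$ is weakly compact, emerge from this stacking analysis.

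\textbf{The main difficulty.} The crux is the transfinite construction in the successor case: showing that at every stage, and coherently through limit stages of the recursion, the next column can be located — this is exactly where ``covering number $\kappa^+$'' must be played against the no‑infinite‑antichain hypothesis. In the limit case the analogous obstacle is the block‑stacking step, i.e.\ turning an unorganized cofinal family of grids into a single lexicographic sum over a chain of the prescribed cardinality.
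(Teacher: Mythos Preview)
Your sufficiency argument is fine and matches the paper's. The necessity argument, however, is not a proof in either case; the passages you yourself flag as ``the main difficulty'' are precisely where the work lies, and nothing you write closes them.

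\textbf{Successor case.} Saying that after committing fewer than $\kappa^+$ vertices the residual poset ``retains enough structure to furnish the next column with the right pattern'' is the whole problem, not a step. At stage $\delta$ you must produce $p_{0,\delta}<p_{1,\delta}<\cdots$ with each $p_{\alpha,\delta}$ above every previously placed $p_{\alpha,\beta}$ and above $p_{\alpha',\delta}$ for $\alpha'<\alpha$; knowing only that $\Cov$ of the remainder is $\kappa^+$ gives no handle on where such a configuration sits. The paper does something quite different. It first proves a reduction lemma: using finiteness of antichains and a short path argument in $\Inc(P)$, one passes to a subposet $Q$ with $\Cov(Q)\geq\nu$ and $\Cov(Q\setminus\uparrow x)<\nu$ for every $x$ (or the dual). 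Under that hypothesis one invokes the cofinal-structure theorem for posets with no infinite antichain to show $Q$ has a cofinal well-ordered chain of type $\lambda\geq\nu$ and that every $\uparrow x$ is \emph{impure}. Impurity plus Erd\H{o}s--Dushnik--Miller then yields a $\lambda$-chain (under inclusion) of unbounded ideals, each with a cofinal $\lambda$-chain, and from that chain of ideals one reads off an embedding of $[\lambda]^2$. None of these ingredients---the reduction to $\Cov(Q\setminus\uparrow x)<\nu$, the cofinality analysis, purity, the ideal chain---appears in your sketch, and I do not see how to carry out your column-by-column recursion without something of comparable strength. Invoking Dorais is not an alternative: his result is the $\kappa^+$ case of what is being proved.

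\textbf{Limit case.} Finding, for each $\alpha$, some copy $R_\alpha\cong[\kappa_\alpha^+]^2$ inside $P$ is easy once the successor case is known, but your ``reorganize a cofinal subfamily into a lexicographic sum'' is again a restatement of the goal. Distinct $R_\alpha$'s can interleave arbitrarily; nothing forces them to stack. The paper avoids this entirely: after the same reduction lemma, either all connected components of $\Inc(Q)$ have $\Cov<\nu$, in which case the components themselves give the lexicographic decomposition, or $\Cov(Q\setminus\uparrow x)<\nu$ for all $x$, in which case one builds an increasing $\cf(\nu)$-sequence $(x_\alpha)$ with $\Cov([x_\alpha,x_{\alpha+1}[)\geq\kappa_\alpha^+$ and applies the successor case inside each interval. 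The point is that the blocks are found already separated (by components or by the intervals $[x_\alpha,x_{\alpha+1}[$), so no post-hoc stacking is needed.
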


\noindent {\bf Comments about item 2.} Let $\lambda:= \cf(\nu)$. Suppose that  $\lambda$ is \emph{weakly compact}, that is every chain of cardinality $\lambda$ contains either a chain of order type $\lambda$ or its dual. Then, in item 2, we may replace the chain $C$ by $\lambda$ or its dual. If we compare the posets $Q$ by embeddability, we get only four nonequimorphic  posets, namely $\sum_{\alpha\in \lambda}[\kappa^+_{\alpha}]^2$, $\sum_{\alpha\in \lambda^*}[\kappa^+_{\alpha}]^2$, and their duals, two families  $(\kappa_{\alpha}^+)_{\alpha\in \lambda}$ giving equimorphic posets. 

If $\lambda$ is not weakly compact, then among the posets we obtain in item 2, some cannot be expressed as a well ordered or reversely well ordered chain  of type $\lambda$. 
For an example, suppose that $C$ is the chain $\R$ of real numbers.   Let $c$ be a bijective map from $\R$ onto $\kappa:= 2^{\aleph_0}$ and $f$ be a strictly increasing map from $\kappa$  in the collection of successor cardinals. If $A$ is any subchain of $C$ set $Q_{A}:= \sum _{a \in A}[f(c(a))]^2$. Then $\Cov(Q_{A})=\bigvee \{f(c(a)): a\in A\}$. Since a well ordered or reversely well ordered subchain $A$  of $\R$ is at most countable, and the cofinality of $2^{\aleph_0}$ is uncountable, $\Cov(Q_{A})<\Cov(Q_{C})$.

\begin{problem} Let  $\nu$ be  an uncountable limit cardinal  such that   $\lambda:=\cf(\nu)$ is not weakly compact, find the least number $G(\nu)$ of posets $Q$ needed in item 2?
\end{problem}
%
%Clearly, $G(\nu)\leq 2^{\lambda}$. 

%Let us recall that a \emph{basis} in the collection of chains of size $\lambda$ is a collection $\mathcal B$ of such chains such that each chain of size $\lambda$  contains some subchain isomorphic to some member of $\mathcal B$, see \cite{baumgartner}. From a theorem of Sierpi\'nski,  it follows that under the Continuum Hypothesis any basis for the uncountable subsets of the real line has the same cardinality as $2^{2^{\aleph_0}}$. It follows from  the Proper Forcing Axiom that the collection of chains of cardinality $\aleph_1$ has a basis made of five chains \cite{moore}. 

\section{Proof of Theorem \ref{thm:covering}}

\subsection {The easy part} 
The conditions in the theorem are sufficient to ensure that $\Cov(P)\geq \nu$. Indeed, 
\begin{enumerate} 

\item If $Q:=[\nu]^2$ then $\Cov (Q)= \nu$.

\item If $Q= \sum_{\alpha\in C}Q_{\alpha}$, where $C$ is a chain of cardinality $\cf (\nu)$, $Q_{\alpha}:= [\kappa_{\alpha}^+]^2$  and $(\kappa_{\alpha}^+)_{\alpha\in C}$ is a  family of distinct cardinals, all less  than $\nu$ such that  $\bigvee \{\kappa_{\alpha}^+: \alpha\in C\} = \nu$,  then $\Cov(Q)= \nu$.

\end{enumerate}

\begin{proof}
We prove Item 1 under the requirement that $\nu$ is a successor cardinal. Suppose that $Cov(Q):= \mu<\nu$. Let $(L_{\gamma})_{\gamma<\mu}$ be a covering of $Q$. It induces a covering of $Q_{\alpha}:= \{(\alpha, \beta): \alpha<\beta< \nu\}$ for each  $\alpha <\nu$. Since $\nu$ is a regular cardinal, there is some $\alpha_{\gamma}<\nu$ such that $L_{\alpha_{\gamma}}\cap Q_{\alpha}$ has cardinality $\nu$. If  $f(\alpha)$ denotes  the least one then the map $f$ is one to one (otherwise, if $f(\alpha)=f(\alpha')$ for some $\alpha \not= \alpha'$, $L_{f(\alpha)}\cap Q_{\alpha}$ and $L_{f(\alpha')}\cap Q_{\alpha'}$ would be cofinal in each other), hence $\nu\leq \mu$. A contradiction. If $\nu$ is a limit,  we observe that $Q$ contains a sum as in Item 2. Finally, we note that  Item 2 follows from Item 1 and the fact that $\Cov(Q)\geq \bigvee_{\alpha \in C}  \Cov (Q_{\alpha})$. 
\end{proof}

\subsection{The substantial part}
We prove a reduction lemma (Lemma \ref{lem:covering2}) leading to a  a reduction theorem (Theorem \ref{thm:covering2})  from which Theorem \ref{thm:covering} follows easily.

We  recall that if $P:= (V, \leq )$ is a poset, two elements $x, y$ of $V$ are \emph{comparable} if either  $x\leq y$ or $y\leq x$; otherwise there are \emph{incomparable}.  The \emph{incomparability graph} of $P$ is the graph $\Inc (P)$ vith vertex set $V$ and edges the pairs $\{x, y\}$ of incomparable elements.   For each subset $A$ of $P$,   we  set $\inc_A(P)$ for the set of vertices  of $P$ which are incomparable to every member of $A$. We set $\downarrow A:= \{x\in P: x\leq a\;   \text {for some } a\in A\}$; we define similarly $\uparrow A$. If $A= \{a\}$, we denote these sets by $\inc_a(P)$, $\downarrow a$ and $\uparrow a$. If $a\leq b$ in $P $ we set  $[a,b]:=  \{ z\in P: a\leq z\leq b\}$ instead of $\uparrow a\cap \downarrow b$.

If $G$ is an undirected graph, two vertices $x$ and $y$ of $G$ are \emph{connected} if there is some finite path joining $x$ to $y$. The blocks of this equivalence relation are the \emph{connected components} of $G$. The graph 
$G$ induces a graph  $G/{\equiv}$ on the set of connected components, this graph  has no edges, hence $G$ is the direct  sum of its connected components  indexed by  $G/{\equiv}$. 

If $G$ is the incomparability graph $\Inc(P)$ of a poset $P$ then  the graph $G/{\equiv}$ is the incomparability graph of a chain.  This yields the following  result belonging  to the folklore of the theory of ordered sets. 

\begin{lemma}  Let $P$ be a poset then  $P$ is the lexicographical  sum of the posets $P_i:= P_{\restriction C_i}$ where each $C_i$ is a connected component of $\Inc(P)$, indexed by a chain $D$;  that is $P:= \sum_{i\in D} P_i$.  
\end{lemma}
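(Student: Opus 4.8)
The plan is to unwind the definitions and check that the two structures being compared literally coincide. Recall that a lexicographical (ordered) sum $\sum_{i\in D}P_i$ over a chain $D$ has underlying set $\bigsqcup_{i\in D}V(P_i)$, with $x\leq y$ iff either $x,y$ lie in the same part $P_i$ and $x\leq_{P_i}y$, or $x\in P_i$, $y\in P_j$ with $i<_D j$. So to prove the lemma I must produce a chain $D$ indexing the connected components $(C_i)_{i\in D}$ of $\Inc(P)$ and verify that for $x\in C_i$, $y\in C_j$ with $i\neq j$, the elements $x$ and $y$ are comparable in $P$ and their order is governed entirely by $i$ versus $j$.

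First I would observe that any two elements in different connected components of $\Inc(P)$ are comparable in $P$: by definition an edge of $\Inc(P)$ joins incomparable elements, so if $x,y$ were incomparable they would be $\Inc(P)$-adjacent, hence in the same component. Thus the partition into components refines the comparability relation between distinct components, and $P$ restricted to a transversal of the components is a chain. Next I would show this ordering is well defined on components, i.e.\ it does not depend on the choice of representatives: if $x,x'\in C_i$ and $y,y'\in C_j$ with $i\neq j$, and $x<y$, I must rule out $y'<x'$. Here the key mechanism, exactly as hinted in the excerpt, is that $G/{\equiv}$ for $G=\Inc(P)$ is the incomparability graph of a chain — equivalently, comparability between components, being irreflexive and (as just noted) total, is also transitive and antisymmetric because within a component, a finite $\Inc(P)$-path ``propagates'' the relation: if $x<y$ and $x\sim x'$ via a single edge $\{x,x'\}\in\Inc(P)$ (so $x,x'$ incomparable, but $x'$ comparable to $y$ since it lies in a different component), then from $x<y$ and $x'\parallel x$ one deduces $x'<y$ (if instead $y<x'$ then $y<x'$ and $x<y$ give $x<x'$, contradicting $x\parallel x'$). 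Iterating along a path shows the relation is independent of the representative in $C_i$, and symmetrically in $C_j$. Define $i<_D j$ iff some (equivalently every) $x\in C_i$, $y\in C_j$ satisfy $x<_P y$; by the totality and the propagation argument this is a linear order on the index set $D$.

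Finally I would assemble the pieces: with $D$ this chain and $P_i:=P_{\restriction C_i}$, the identity map on $V(P)$ is an isomorphism $P\cong\sum_{i\in D}P_i$. Indeed both posets have the same underlying set; for $x,y$ in the same component the orders agree by definition of the induced suborder; for $x\in C_i$, $y\in C_j$ with $i\neq j$ the order in $P$ is comparability in the direction given by $<_D$, which is exactly the order in the lexicographical sum. Hence the identity preserves and reflects $\leq$, completing the proof.

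The only mildly delicate point — the ``main obstacle'', though it is really routine — is the well-definedness of the order on components, i.e.\ the propagation-along-a-path argument showing that the comparability direction between $C_i$ and $C_j$ is the same for all choices of representatives; everything else is immediate from the definitions. This is precisely the content of the remark that $\Inc(P)/{\equiv}$ is the incomparability graph of a chain, which one can either cite as folklore or prove by the short transitivity computation sketched above.
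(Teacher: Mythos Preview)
Your argument is correct and is exactly the standard verification of this folklore fact. The paper does not actually supply a proof of this lemma: it merely records, just before the statement, that $\Inc(P)/{\equiv}$ is the incomparability graph of a chain and then states the lemma as ``belonging to the folklore of the theory of ordered sets.'' Your write-up unpacks precisely that remark, so there is nothing to contrast.
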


From this we deduce easily the following two results. 
\begin{corollary}
If  $P$ is a poset and $(P_i)_{i\in D}$ is the set of the posets induced on the   connected component of $P$ then:
\begin{equation}\label {eqsum}
\text{Either}\;  \Cov (P)= \Cov(P_i) \;  \text{for some}\;   i\in D,  \; \text{or}\;  \Cov (P)= \bigvee_{i\in D}  \Cov(P_i).
\end{equation}
\end{corollary}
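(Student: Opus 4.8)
The plan is to invoke the lemma just stated to write $P = \sum_{i\in D} P_i$ for a suitable chain $D$ (the quotient $\Inc(P)/{\equiv}$), and then to prove the sharper statement that $\Cov(P) = \bigvee_{i\in D}\Cov(P_i)$; the dichotomy \eqref{eqsum} then follows at once, the first alternative being the case where this supremum is attained at some $i\in D$ and the second the case where it is not. One inequality is immediate: each $P_i$ is isomorphic to an induced subposet of $P$, so $\Cov(P)\geq \Cov(P_i)$ by the monotonicity of $\Cov$ under embeddability noted in the introduction, and taking the supremum over $i\in D$ gives $\Cov(P)\geq \bigvee_{i\in D}\Cov(P_i)$.

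For the reverse inequality, set $\mu := \bigvee_{i\in D}\Cov(P_i)$. For each $i\in D$ fix a covering of $P_i$ by chains $(L^i_\gamma)_{\gamma<\Cov(P_i)}$ and pad it by putting $L^i_\gamma := \emptyset$ whenever $\Cov(P_i)\leq \gamma<\mu$, so that for every $i$ we have a $\mu$-indexed family of chains whose union is $P_i$. For each $\gamma<\mu$ let $M_\gamma := \bigcup_{i\in D} L^i_\gamma$. The one point that requires attention is that $M_\gamma$ is a chain in $P$: two elements of $M_\gamma$ lying in distinct blocks $P_i$ and $P_j$ are comparable because $D$ is a chain and $P=\sum_{i\in D}P_i$ is a lexicographic sum, while two elements lying in the same block $P_i$ both belong to the chain $L^i_\gamma$. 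Since $\bigcup_{\gamma<\mu}M_\gamma = \bigcup_{i\in D}P_i = P$, the family $(M_\gamma)_{\gamma<\mu}$ covers $P$ by chains, so $\Cov(P)\leq \mu$.

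Combining the two inequalities yields $\Cov(P)=\mu=\bigvee_{i\in D}\Cov(P_i)$. If this supremum is a maximum, realized by some $i_0\in D$, then $\Cov(P)=\Cov(P_{i_0})$, which is the first alternative of \eqref{eqsum}; otherwise $\Cov(P_i)<\Cov(P)$ for every $i\in D$ and the second alternative holds. There is no serious obstacle in this argument; the only things to watch are that it must not presuppose $\mu$ to be infinite (it works verbatim when the $\Cov(P_i)$ are bounded, i.e. $\mu$ finite), and that the padding-by-empty-sets step is precisely what allows a single ordinal $\mu$ to index the coverings of all blocks simultaneously so that the union $M_\gamma$ is well defined for each $\gamma$.
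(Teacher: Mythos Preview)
Your proof is correct and follows exactly the approach the paper intends: the corollary is stated without proof, as one of the ``easy'' consequences of the lexicographic decomposition lemma, and your argument---covering each summand by $\mu$ chains (padding with empty chains) and gluing the $\gamma$-th chains across the sum---is the natural way to fill in those details. Your observation that in fact $\Cov(P)=\bigvee_{i\in D}\Cov(P_i)$ always holds, so that the disjunction in \eqref{eqsum} is really a single identity, is a useful sharpening.
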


\begin{lemma}\label{lem:covering3}
  Let $\nu$ be an uncountable cardinal and $P$ be a poset with no infinite antichain such that $\Cov (P)\geq \nu$ and $\Cov (P_{\restriction C})<\nu$ for each   
 connected component $C$ of $\Inc (P)$. Then $\nu$ is a limit cardinal and   if $(\kappa^+_{\alpha})_{\alpha<\cf (\nu)}$ is a cofinal sequence in $\nu$, there is an injection  $\alpha \rightarrow C_{\alpha}$ from $\cf(\nu)$ in the set $D$ of connected components of $\Inc(P)$ such that for each $\alpha <\cf(\nu)$,  $\Cov (P_{\restriction C_{\alpha}})\geq \kappa^+_{\alpha}$. 
 \end{lemma}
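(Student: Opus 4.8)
The plan is to exploit the Corollary about connected components together with the structure of $\Cov$ on lexicographical sums. By the Lemma preceding the Corollary, $P = \sum_{i\in D}P_i$ where $P_i = P_{\restriction C_i}$ and each $C_i$ is a connected component of $\Inc(P)$, indexed by a chain $D$. Since by hypothesis $\Cov(P_{\restriction C})<\nu$ for every connected component $C$, the first alternative in \eqref{eqsum} is impossible, so we must be in the second case: $\Cov(P) = \bigvee_{i\in D}\Cov(P_i)$. Thus $\nu \le \bigvee_{i\in D}\Cov(P_i)$ with each $\Cov(P_i)<\nu$; in particular $\nu$ cannot be a successor cardinal (a supremum of cardinals all strictly below a successor $\kappa^+$ is at most $\kappa<\kappa^+$), so $\nu$ is a limit cardinal, and the set $\{\Cov(P_i):i\in D\}$ is cofinal in $\nu$, hence of cardinality at least $\cf(\nu)$.

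Next, fix a cofinal sequence $(\kappa^+_\alpha)_{\alpha<\cf(\nu)}$ in $\nu$; without loss of generality we may take it strictly increasing. The idea is to select, for each $\alpha<\cf(\nu)$, a component index $i$ with $\Cov(P_i)\ge\kappa^+_\alpha$, and to do so injectively. For this I would build the injection $\alpha\mapsto C_\alpha$ by transfinite recursion on $\alpha<\cf(\nu)$: having chosen the (fewer than $\cf(\nu)$ many) values $C_\beta$ for $\beta<\alpha$, observe that since $\{\Cov(P_i):i\in D\}$ is cofinal in $\nu$ and $\kappa^+_\alpha<\nu$, there is some $i\in D$ with $\Cov(P_i)\ge\kappa^+_\alpha$; moreover there must be such an $i$ outside the already-used set $\{C_\beta:\beta<\alpha\}$. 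Indeed, if every $i$ with $\Cov(P_i)\ge\kappa^+_\alpha$ were among the fewer than $\cf(\nu)$ already-chosen components, then $\{\Cov(P_i):i\in D\}$ would be the union of the set $\{\Cov(P_{C_\beta}):\beta<\alpha\}$ (of size $<\cf(\nu)$, hence bounded below $\nu$ by regularity of $\cf(\nu)$) together with $\{\Cov(P_i):\Cov(P_i)<\kappa^+_\alpha\}$ (bounded by $\kappa^+_\alpha<\nu$), and so it would be bounded below $\nu$, contradicting cofinality. Pick such an $i$ and set $C_\alpha:=C_i$. This produces the required injection with $\Cov(P_{\restriction C_\alpha})\ge\kappa^+_\alpha$ for all $\alpha$.

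The only point that needs a little care — and the spot I expect to be the main (minor) obstacle — is the bookkeeping in the recursion: one must be sure that at stage $\alpha$ the set of "still available" component-covering-numbers above $\kappa^+_\alpha$ is nonempty, which is exactly the regularity argument above and uses that $\cf(\nu)$ is regular and that the previously chosen set has size $<\cf(\nu)$. Everything else is a direct unwinding of the Corollary and of the definition of a limit cardinal. Note also that the hypothesis that $P$ has no infinite antichain is not actually needed for this particular lemma (it is inherited by each $P_i$ and will be used downstream), and that the injectivity is automatic from the recursive construction since at each stage we deliberately avoid the previously used indices.
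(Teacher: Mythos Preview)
Your proof is correct and follows exactly the route the paper intends: the paper states this lemma (together with the preceding Corollary) as something one ``deduces easily'' from the decomposition $P=\sum_{i\in D}P_i$ and equation~\eqref{eqsum}, without writing out the argument. Your use of the dichotomy in \eqref{eqsum} to force the supremum case, the successor/limit analysis, and the transfinite recursion with the cofinality bookkeeping is precisely the intended easy deduction.
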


\begin{lemma}\label{lem:metric} Let $P$ be a poset and $x, y$ be two  distinct vertices in the same connected component of $\Inc(P)$. We set $d_P(x,y)=0$ if $x=y$ and otherwise $d_P(x,y):= n+1$ where  $n$ is the least integer such that there is a path $x=:x_0, \dots, x_{n+1}:= y$ in $\Inc(P)$ joining $x$ and $y$. Suppose that $x<y$.  Then:
\begin{enumerate}
\item $x_i<x_j$ for $i+2\leq j$;
\item $[x,y] \subseteq \Inc_{x_1}(P) \cup \dots \cup \Inc_{x_n}(P)$.
\end{enumerate} 
 
\end{lemma}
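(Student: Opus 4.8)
\medskip

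\noindent\emph{Proof strategy.} Write $m:=n+1=d_P(x,y)$, so that the path $x=x_0,x_1,\dots,x_m=y$ realizes the distance in $\Inc(P)$. The plan rests on recording, once and for all, what being distance--realizing buys us: such a path has no repeated vertex; consecutive vertices $x_i,x_{i+1}$ are incomparable in $P$ (they form an edge of $\Inc(P)$); and any two vertices $x_i,x_j$ with $|i-j|\ge 2$ are comparable in $P$, since an $\Inc(P)$--edge between them would splice into a shorter $x$--$y$ walk, hence a shorter path, contradicting minimality of $m$. The same remarks apply verbatim to any contiguous sub-path $x_i,x_{i+1},\dots,x_j$, which is itself a distance--realizing path from $x_i$ to $x_j$ of length $j-i$. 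Note $x<y$ forces $m\ge 2$, i.e. $n\ge 1$, so the statement is non-vacuous.

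For item~1 I would first isolate the elementary lemma: \emph{if $z_0,z_1,\dots,z_\ell$ is a distance--realizing path in $\Inc(P)$ with $\ell\ge 2$ and $z_0<z_\ell$, then $z_0<z_i$ for all $2\le i\le\ell$, and $z_i<z_\ell$ for all $0\le i\le\ell-2$.} The first half goes by downward induction on $i$ starting from $i=\ell$ (the hypothesis): given $z_0<z_{i+1}$ and the comparability of $z_0,z_i$ (valid since $i\ge 2$), the alternative $z_i<z_0$ would force $z_i<z_{i+1}$, contradicting that $\{z_i,z_{i+1}\}$ is an edge, so $z_0<z_i$. The second half is the mirror statement, proved by upward induction on $i$ from $i=0$, using $z_{i-1}<z_\ell$ and the comparability of $z_i,z_\ell$. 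Granting this, fix $i,j$ with $i+2\le j\le m$; then $i\le m-2$, so the second half applied to the full path gives $x_i<x_m$, and now the sub-path $x_i,x_{i+1},\dots,x_m$ is a distance--realizing path of length $\ge 2$ with comparable endpoints, so the first half applied to it yields $x_i<x_j$.

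For item~2, let $z\in[x,y]$, i.e. $x_0\le z\le x_m$, and suppose for contradiction that $z$ is comparable to every one of $x_1,\dots,x_n$. First dispose of degeneracies: if $z\in\{x_0,x_m\}$ then (using $n\ge 1$) $z$ is path-adjacent to $x_1$ or $x_n$, hence incomparable to it, so $z$ already lies in the right-hand union; and $z=x_i$ with $1\le i\le n$ is impossible, since $x_0\le x_i\le x_m$ excludes $i\in\{1,n\}$ (those $x_i$ are incomparable to an endpoint), while for $2\le i\le n-1$ the vertex $x_i$ is incomparable to $x_{i-1}$ and $x_{i+1}$, against the supposition. So $z$ is distinct from, and comparable to, every $x_i$ ($0\le i\le m$). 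Set $\varepsilon_i=+$ if $x_i<z$ and $\varepsilon_i=-$ if $z<x_i$. For each edge $\{x_i,x_{i+1}\}$, the patterns $x_i<z<x_{i+1}$ and $x_{i+1}<z<x_i$ are both forbidden (each would make $x_i,x_{i+1}$ comparable), so $\varepsilon_i=\varepsilon_{i+1}$; hence all $\varepsilon_i$ agree. But $x_0=x\le z$ gives $\varepsilon_0=+$ and $z\le y=x_m$ gives $\varepsilon_m=-$, with $m\ge 2$ so $0\ne m$: contradiction. Therefore $z\in\Inc_{x_i}(P)$ for some $i\in\{1,\dots,n\}$.

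I do not expect a genuine obstacle here; the lemma is a warm-up for the main argument. The only points requiring a touch of care are the remark that contiguous sub-paths of a distance--realizing path are again distance--realizing (a one-line walk-shortening argument, used twice) and the little case analysis ruling out $z=x_i$ in item~2; the conceptual content — two short monotone inductions for item~1, and the ``the sign $\varepsilon_i$ is locally constant along the path'' observation for item~2 — is straightforward.
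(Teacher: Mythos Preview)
Your proof is correct. For item~1 the paper simply cites an external reference (Lemma~I.2.2 of \cite{pouzet78}), whereas you supply a self-contained argument via two monotone inductions along the path; this is a genuine addition. For item~2 the approaches differ: the paper argues by induction on $n$, using item~1 to know $x_0<x_n$ and then splitting on whether $z\le x_n$ (invoke the inductive hypothesis on the sub-path $x_0,\dots,x_n$) or $z\not\le x_n$ (forcing $z\in\Inc_{x_n}(P)$, since $x_n<z\le x_{n+1}$ would make $x_n$ and $x_{n+1}$ comparable). Your route is a direct, induction-free argument: the sign $\varepsilon_i$ recording whether $x_i<z$ or $z<x_i$ is constant along the path because a sign change across an edge would make the two endpoints of that edge comparable, yet the endpoints force opposite signs. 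The paper's induction is shorter on the page but leans on item~1; your sign-constancy argument is slightly longer because of the degeneracy bookkeeping (ruling out $z=x_i$) but is conceptually cleaner and does not invoke item~1 at all.
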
 
\begin{proof} Item 1 is Lemma I.2.2 p.5 of \cite{pouzet78}.
Item 2. Induction on $n$. For $n:= 1$, we have trivially $[x_0, x_2]  \subseteq \Inc_{x_1}(P)$. Let $n>1$. Let  $z\in  [x,y]$. If $z\leq x_n$, we may apply induction to $[x, x_n]$. Then    $z\in \Inc_{x_1}(P) \cup \dots \cup \Inc_{x_{n-1}}(P)$. If $z\not\leq x_n$, then $z \in \Inc_  {x_n}(P)$ (otherwise $x_n<z$ and since $z<x_{n+1}$, $x_n<x_{n+1}$ which is impossible. 
\end{proof}

%Now, let $P$ with $\Cov(P)= \nu$. If $\Cov (P)= \Cov(P_i)$ for some $i\in D$, the reduction is done. If not, $\nu$ is a limit ordinal. In this case we prove that item 2 of Theorem \ref{thm:covering} holds. Pick a cofinal sequence $(\kappa^+_{\alpha})_{\alpha<\cf (\nu)}$ in $\nu$. For each $\alpha< \cf(\nu)$ there is some  $P_{i}$ with $\Cov (P_{i})\geq \kappa^+_{\alpha}$. We may chose $i:= \varphi (\alpha)$ in an injective manner. Via the inductive assumption, $P_{\varphi(\alpha)}$ embeds a poset $Q_{\varphi(\alpha)}$ isomorphic to $[\kappa^+_{\alpha}]^2$ or to its dual. The sum $Q:= \sum_{\varphi(\alpha)\in D} Q_{\varphi(\alpha)}$ has covering number $\nu$ and the form given in Theorem \ref{thm:covering}.  

\begin{lemma}\label{lem:covering2}
  Let $\nu$ be an uncountable cardinal and $P$ be a poset with no infinite antichain such that $\Cov (P)\geq \nu$. Then $P$ contains an induced subposet $Q$ such that $\Cov (Q)\geq \nu$ and 
  \begin{enumerate} \item  either $Q$ or its dual satisfies  $\Cov(Q\setminus \uparrow x)< \nu$ for every $x\in Q$, or  \item 
 each connected component $Q_i$ of $Q$ satisfies $\Cov(Q_i)<\nu$.
 \end{enumerate} 
 \end{lemma}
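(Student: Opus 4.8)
The plan is to perform a transfinite/iterative "whittling down" of $P$, at each stage trying to pass to a smaller induced subposet that still has covering number $\geq \nu$ but loses the bad behaviour. First I would set up the natural dichotomy: either some connected component $C$ of $\Inc(P)$ already has $\Cov(P_{\restriction C})\geq \nu$, in which case by the folklore lemma we may replace $P$ by $P_{\restriction C}$ and assume $P$ is $\Inc$-connected; or else every component has small covering number, and by Lemma \ref{lem:covering3} (or its Corollary \ref{eqsum} applied in the other direction) we already land in alternative (2) with $Q:=P$ and are done. So the real work is the connected case, and there we aim for alternative (1).

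So assume $\Inc(P)$ is connected and $\Cov(P)\geq\nu$. I would look for a single vertex witnessing the failure of condition (1) for $P$ itself: if $\Cov(P\setminus\uparrow x)<\nu$ for every $x$, and likewise $\Cov(P\setminus\downarrow x)<\nu$ for every $x$ (the dual), then $Q:=P$ works. Otherwise there is an $x_0$ with $\Cov(P\setminus\uparrow x_0)\geq\nu$ (or, dually, $\Cov(P\setminus\downarrow x_0)\geq\nu$, which we handle symmetrically by passing to the dual once and for all). The idea is to iterate: build a sequence $x_0,x_1,x_2,\dots$ and a decreasing chain of induced subposets $P=P^{(0)}\supseteq P^{(1)}\supseteq\cdots$, where $P^{(n+1)}:=P^{(n)}\setminus\uparrow x_n$ is chosen so that $\Cov(P^{(n+1)})\geq\nu$, stopping when condition (1) is achieved. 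The key point is that this process must terminate: since $P$ has no infinite antichain, one cannot have an infinite strictly $\subseteq$-decreasing sequence of up-sets of the relevant form — indeed removing $\uparrow x_n$ means $x_n\notin P^{(n+1)}$, and if the process ran forever the $x_n$'s would form a set whose structure (being pairwise related in a controlled way via the removed up-sets) is constrained by the no-infinite-antichain hypothesis. Here Lemma \ref{lem:metric} is the natural tool: it controls how comparabilities propagate along paths in $\Inc$, which is exactly what bounds the complexity of the removed pieces and forces a stabilising vertex to exist.

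More carefully, I would argue by contradiction that if no finite iteration reaches condition (1), then transfinitely (using regularity/cofinality bookkeeping, and the fact that at limit stages intersections of the nested up-set-complements still have large covering number by Corollary \ref{eqsum} applied to components, or by a direct union argument) we produce an infinite antichain or an infinite set of vertices violating the hypothesis on $P$ — contradiction. The cleanest formulation is probably: among all induced subposets $Q\leq P$ with $\Cov(Q)\geq\nu$, pick one that is minimal with respect to some well-founded rank (e.g. minimising over the partial order of "$\uparrow$-complements"), using that $P$ has no infinite antichain to guarantee such a minimal $Q$ exists; then either $Q$ itself satisfies (1) or the minimality is contradicted by one more step of removing an $\uparrow x$. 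The main obstacle I anticipate is exactly this well-foundedness / termination argument: one must show that the hypothesis "no infinite antichain" genuinely blocks an infinite regress of the form $Q\supsetneq Q\setminus\uparrow x_0\supsetneq Q\setminus(\uparrow x_0\cup\uparrow x_1)\supsetneq\cdots$ while keeping $\Cov\geq\nu$, and getting the limit-stage covering-number estimate right (that a directed union, or equivalently an intersection of the complements, retains covering number $\geq\nu$) is where the cofinality of $\nu$ and the connectedness reduction have to be combined delicately. Everything else — the reduction to the connected case, the use of Lemma \ref{lem:metric} to control the removed up-sets, and the passage to the dual — should be routine once that core finiteness phenomenon is isolated.
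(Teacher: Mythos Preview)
Your plan has a genuine gap, and it is precisely the one you flag yourself: the termination argument for the iterated removal of up-sets does not go through from ``no infinite antichain'' alone. If at each stage you pick $x_n\in P^{(n)}$ with $\Cov(P^{(n)}\setminus\uparrow x_n)\geq\nu$ and set $P^{(n+1)}:=P^{(n)}\setminus\uparrow x_n$, the only constraint on the $x_n$'s is that $x_m\not\geq x_n$ for $m>n$. Ramsey-type reasoning then yields either an infinite antichain (good) or an infinite strictly decreasing chain $x_0>x_1>\cdots$ (not a contradiction). In the latter case the $P^{(n)}$'s are just $P\setminus\uparrow x_{n-1}$ and nothing forces the process to stop; the vague ``well-founded rank on $\uparrow$-complements'' is not actually well-founded in general. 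Your proposed use of Lemma~\ref{lem:metric} to ``control the removed pieces'' is also not made precise enough to substitute for a termination argument.

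The paper's proof avoids iteration entirely by inserting a different first step. Instead of reducing to the connected case right away, it first passes to an induced $Q\subseteq P$ with $\Cov(Q)\geq\nu$ and $\Cov(\inc_x(Q))<\nu$ for every $x\in Q$: take a maximal (hence finite) antichain $L$ with $\Cov(\inc_L(P))\geq\nu$ and set $Q:=\inc_L(P)$. Only then does one split $Q$ into connected components. If some component $Q_i$ still has $\Cov(Q_i)\geq\nu$, a \emph{single} step suffices: pick $x_0\in Q_i$ with $\Cov(\uparrow x_0)\geq\nu$ (possible since $Q_i=\downarrow x_0\cup\uparrow x_0\cup\inc_{x_0}(Q_i)$ and the last piece is small), and set $Q':=\uparrow x_0$. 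For any $y\in Q'$, Lemma~\ref{lem:metric} gives $[x_0,y]\subseteq\inc_{x_1}(Q_i)\cup\cdots\cup\inc_{x_n}(Q_i)$, each summand having $\Cov<\nu$ by the preliminary reduction; together with $\inc_y(Q_i)$ this covers $Q'\setminus\uparrow y$, so $\Cov(Q'\setminus\uparrow y)<\nu$. The missing idea in your plan is exactly this $\inc_x$-reduction via a maximal finite antichain: it is what replaces your open-ended transfinite descent by a one-shot construction.
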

The proof relies on the next  two claims. 
\begin{claim}\label{claim1} $P$ contains an induced subposet $Q$ such that $\Cov(Q)\geq \nu$ and  
$\Cov(Q_{\restriction 
\inc_Q (x)})< \nu$ for every $x\in Q$.
 \end{claim}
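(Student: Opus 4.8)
The plan is to build $Q$ by a transfinite shrinking process. We want a subposet on which no vertex is "large in the incomparability direction": concretely, $\Cov(Q_{\restriction \inc_Q(x)}) < \nu$ for every $x \in Q$. Call a vertex $x$ of a subposet $P'$ \emph{bad in $P'$} if $\Cov(P'_{\restriction \inc_{P'}(x)}) \geq \nu$. If $P$ has no bad vertex we are done with $Q = P$. Otherwise pick a bad vertex $x_0$ and replace $P$ by $P^{(1)} := P_{\restriction \inc_{P}(x_0)}$, which still has covering number $\geq \nu$ by choice of $x_0$, still has no infinite antichain, and is "smaller" in a sense to be made precise. Iterate: having produced $P^{(\xi)}$ with $\Cov(P^{(\xi)}) \geq \nu$, if it has a bad vertex $x_\xi$ set $P^{(\xi+1)} := P^{(\xi)}_{\restriction \inc_{P^{(\xi)}}(x_\xi)}$, and at limits take the intersection $\bigcap_{\eta < \xi} P^{(\eta)}$ (which is just the set of vertices incomparable to all previously chosen $x_\eta$). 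The key point to verify is that this process cannot run forever, and that the limit stages behave well.

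The main obstacle is ensuring termination and that covering number $\geq \nu$ is preserved at limit stages. For termination I would argue by a rank/antichain-size consideration: note that at a successor step every $x \in P^{(\xi+1)}$ is incomparable to $x_\xi$, and $x_\xi$ itself is comparable (indeed, by construction, once removed it is gone), so the chosen vertices $\{x_\eta : \eta < \xi\}$ together with any fixed vertex of $P^{(\xi)}$ form... no — rather, the relevant observation is that the sequence $(x_\eta)$ yields, for $\eta < \eta'$, that $x_{\eta'} \in \inc_{P^{(\eta)}}(x_\eta)$, so $\{x_\eta : \eta\}$ is an infinite antichain if the process has length $\geq \omega$. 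Since $P$ has no infinite antichain, the process must stop at some finite stage! Thus after finitely many steps we reach $P^{(n)}$ with no bad vertex, and because each step only passed to a subposet with covering number still $\geq \nu$, this $Q := P^{(n)}$ works. The limit-stage worry disappears entirely.

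So the argument is short once framed correctly: define badness, observe that a bad vertex $x$ can be thrown away while keeping $\Cov \geq \nu$ (that is exactly what "$x$ bad" buys us, since $\Cov(P_{\restriction \inc_P(x)}) \geq \nu$), observe that successively chosen bad vertices form an antichain, invoke the no-infinite-antichain hypothesis to get finiteness of the process, and take $Q$ to be the final poset. One should double-check the trivial facts that an induced subposet of a poset with no infinite antichain has no infinite antichain, and that $\inc_{P^{(\xi+1)}}(x) = \inc_{P^{(\xi)}}(x) \cap P^{(\xi+1)}$ so that "bad in $P^{(\xi+1)}$" refers to the right set; both are immediate from the definitions. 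I expect no genuine difficulty beyond getting these bookkeeping points straight.
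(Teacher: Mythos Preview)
Your argument is correct and is essentially the same as the paper's, just presented iteratively rather than in one stroke. The paper observes that the family of antichains $A$ of $P$ with $\Cov(P_{\restriction \inc_A(P)})\geq \nu$ is nonempty (it contains $\emptyset$) and, since all antichains of $P$ are finite, has a maximal member $L$; then $Q:=P_{\restriction \inc_L(P)}$ works. Your sequence $x_0,x_1,\dots$ is exactly such a maximal antichain built one element at a time, and your $P^{(n)}$ equals $P_{\restriction \inc_{\{x_0,\dots,x_{n-1}\}}(P)}$, so the two arguments coincide.
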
 
 
\noindent{ \bf Proof of Claim \ref{claim1}.} 
If there is no $x\in P$ such that $\Cov (P_{\restriction \inc_x(P)})\geq \nu$, set $Q:=P$. If not consider the collection of antichains $A$ of $P$ such that $\Cov (P_{\restriction \inc_A(P)})\geq \nu$. The  antichains of $P$ being finite, one of these antichains,  say $L$, is maximal w.r.t. inclusion. The poset $Q:= P_{\restriction \inc{L}(P)}$ satisfies the conclusion of the lemma. 
\hfill $\Box$

\begin{claim}\label{claim2} Let $P$ be a poset with no infinite antichain such that $\Cov(P)\geq \nu$ and  
$\Cov(P_{\restriction \inc_x (P)})< \nu$ for every $x\in P$. Then, \begin{enumerate}[{(a)}]
\item either $\Cov(P_{\restriction \uparrow x})\geq  \nu$ or $\Cov(P_{\restriction \downarrow x})\geq  \nu$ for every $x\in P$.
\item Let  $x_0\in P$ such that $Q:=P_{\restriction \uparrow x_0}$ satisfies  $\Cov(Q)\geq  \nu$. If   $\inc(P)$ is connected,    then  $\Cov(Q \setminus  \uparrow y)< \nu$ for every $y\in Q$.
 \end{enumerate}
 \end{claim}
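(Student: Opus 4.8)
The plan is to prove the two parts of Claim~\ref{claim2} separately, both by contradiction, exploiting the hypothesis that $\Cov(P_{\restriction \inc_x(P)})<\nu$ for every $x$ together with the no-infinite-antichain assumption and the finite-distance structure provided by Lemma~\ref{lem:metric}.

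\textbf{Part (a).} Suppose that for some $x\in P$ we have $\Cov(P_{\restriction \uparrow x})<\nu$ and $\Cov(P_{\restriction \downarrow x})<\nu$. Every vertex $y\in P$ is either comparable to $x$ (hence lies in $\downarrow x\cup\uparrow x$) or incomparable to $x$ (hence lies in $\inc_x(P)$). Thus $P=\,\downarrow x\,\cup\,\uparrow x\,\cup\,\inc_x(P)$, and $\Cov(P)\leq \Cov(P_{\restriction \downarrow x})+\Cov(P_{\restriction \uparrow x})+\Cov(P_{\restriction\inc_x(P)})$. Since all three terms are $<\nu$ and $\nu$ is an uncountable, hence infinite, cardinal, the sum is still $<\nu$, contradicting $\Cov(P)\geq\nu$. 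So (a) holds.

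\textbf{Part (b).} Fix $x_0$ with $\Cov(Q)\geq\nu$ where $Q=P_{\restriction\uparrow x_0}$, and suppose toward a contradiction that some $y\in Q$ satisfies $\Cov(Q\setminus\uparrow y)\geq\nu$. Since $\inc(P)$ is connected and both $x_0$ and $y$ lie in $Q$ (with $x_0\leq y$, as $y\in\uparrow x_0$), consider the distance $d_P(x_0,y)=n+1$ and a shortest path $x_0=:z_0,z_1,\dots,z_{n+1}:=y$ in $\Inc(P)$. The aim is to cover $Q\setminus\uparrow y$ by few pieces, each of small covering number. Every $z\in Q$ satisfies $x_0\leq z$; if moreover $z\notin\uparrow y$, then $z$ is not above $y$, and I want to locate $z$ inside $[x_0,y]$ or inside one of the sets $\inc_{z_i}(P)$. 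The key point: if $z\in Q\setminus\uparrow y$ is comparable to $y$, then $z\leq y$ and hence $z\in[x_0,y]$, which by Lemma~\ref{lem:metric}(2) is contained in $\inc_{z_1}(P)\cup\dots\cup\inc_{z_n}(P)$; if $z$ is incomparable to $y$, then $z\in\inc_y(P)$. Therefore
\[
Q\setminus\uparrow y\ \subseteq\ \inc_{z_1}(P)\cup\dots\cup\inc_{z_n}(P)\cup\inc_y(P),
\]
a union of $n+1$ sets. Each $\inc_{z_i}(P)$ and $\inc_y(P)$ has covering number $<\nu$ by hypothesis, and $n+1$ is finite, so $\Cov(Q\setminus\uparrow y)<\nu$ because $\nu$ is infinite — contradicting our assumption. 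Hence $\Cov(Q\setminus\uparrow y)<\nu$ for every $y\in Q$, proving (b).

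\textbf{Main obstacle.} The substantive step is the covering inclusion in Part (b): verifying that every $z\in Q\setminus\uparrow y$ genuinely falls into one of the listed incomparability sets. This requires the case analysis on comparability of $z$ with $y$ and the careful invocation of Lemma~\ref{lem:metric}(2), noting that the path from $x_0$ to $y$ has finite length so only finitely many pieces appear — this finiteness is what makes the cardinal arithmetic ($n+1$ many sets, each of covering number $<\nu$, summing to $<\nu$) go through. One should also double-check that $d_P(x_0,y)$ is well-defined, i.e. that $x_0\neq y$ (if $x_0=y$ then $Q\setminus\uparrow y=\emptyset$ and there is nothing to prove) and that $x_0,y$ lie in the same connected component of $\Inc(P)$, which is immediate from the hypothesis that $\inc(P)$ is connected.
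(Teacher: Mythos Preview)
Your proof is correct and follows essentially the same approach as the paper's: both parts use the decomposition $P=\downarrow x\cup\uparrow x\cup\inc_x(P)$ for (a), and for (b) the inclusion $Q\setminus\uparrow y\subseteq [x_0,y]\cup\inc_y(P)$ together with Lemma~\ref{lem:metric}(2) to cover $[x_0,y]$ by finitely many sets $\inc_{z_i}(P)$. The only cosmetic difference is that you frame (b) as a contradiction and fold the two inclusions into a single displayed union, whereas the paper argues directly; the handling of the case $x_0=y$ and the verification that $x_0,y$ lie in the same component are also identical in substance.
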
 
 \noindent{ \bf Proof of Claim \ref{claim2}.} Item (a).  Let $x\in P$. We have $P= \downarrow x\cup \uparrow x \cup \inc_x(P)$. Hence, $\Cov(P)\leq \Cov  (P_{\restriction \downarrow x}) \cup  \Cov (P_{\restriction \uparrow x}) \cup \Cov(P_{\restriction  \inc_x(P)})$. Since $\Cov(P_{\restriction \inc_x(P)})< \nu$, either $\Cov  (P_{\restriction \downarrow x})\geq \nu$ or $\Cov (P_{\restriction \uparrow x})\geq \nu$. \hfill $\Box$
  
  Item (b) Let $y\in Q$. We may suppose $y\not =x_0$, otherwise since $Q\setminus \uparrow y=\emptyset$, $\Cov( Q\setminus \uparrow  y)<  \nu$ as claimed. Let $n$ be the least integer such that there is a path $x_0, \dots, x_{n+1}= y$ in $\Inc(P)$ joining $x_0$ and $y$.  According to Item 2 of Lemma \ref{lem:metric},  $[x_0, x_{n+1}] \subseteq \inc_{x_1}(P) \cup \dots \cup \inc_{x_{n}}(P)$. Since $\Cov(P_{\restriction \inc_x(P)})< \nu$ for each $x\in P$, we have $\Cov(Q_{\restriction [x_0,  x_{n+1}]})<\nu$. Since  $Q\setminus \uparrow y \subseteq [x_0, y] \cup \inc_y(P)$,  $\Cov(Q\setminus \uparrow y)<\nu$ as claimed.  \hfill $\Box$ 
 
 With these two claims, the proof of Lemma \ref{lem:covering2} goes as follows. We start with $P$, we select $Q$ given by  Claim \ref{claim1}. We decompose $Q$ in connected components $Q_i$'s. If $\Cov(Q_i)\geq \nu$ for some 
 $i$, we apply Claim \ref{claim2}. We pick $x_0\in Q_i$. By item (a), we may suppose  that  $\Cov(Q_i \restriction{\uparrow x_0})\geq \nu$ for some $x_0\in Q_i$. Then we apply Item (b) and $(1)$ holds. Otherwise  $\Cov(Q_i)< \nu$ for each $i$ and $(2)$ holds.
 \hfill $\Box$

  This reduction lemma allows to deduce easily the proof of Theorem \ref{thm:covering} from the following result.
 
  \begin{theorem}\label{thm:covering2}
  Let $\nu$ be an uncountable cardinal and $P$ be a poset with no infinite antichain such that $\Cov (P)\geq \nu$ and $\Cov(P\setminus \uparrow x)< \nu$ for every $x\in P$. 
 \begin{enumerate}
 
 \item  If $\nu$ is a successor cardinal, i.e., $\nu= \kappa^+$, then   $P$ contains  a copy of $[\nu]^2$. 
  
  \item If $\nu$ is a limit cardinal $\nu>\omega$ then $P$ contains a poset $Q$ of the form  $\sum_{\alpha<\cf (\nu)} Q_{\alpha}$ or of the form $\sum_{\alpha <\cf (\nu)}Q_{\alpha}^{*}$, where $Q_{\alpha}:=[\kappa^+_{\alpha}]^2$, $(\kappa_{\alpha}^+)_{\alpha< \cf(\nu)}$ is a   family of distinct cardinals, all less  than $\nu$ and $\bigvee \{\kappa_{\alpha}^+: \alpha< \cf (\nu)\} = \nu$.  
  
  \end{enumerate}
\end{theorem}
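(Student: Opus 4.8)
The plan is to prove item~1 (the successor case) by an explicit transfinite construction of a copy of $[\nu]^2$ inside $P$, and then to obtain item~2 by cutting $P$ into ``slabs'' to which item~1 applies. The one consequence of the hypothesis that drives everything is this: for every $x\in P$ one has $\Cov(\uparrow x)\geq\nu$, since $P=\uparrow x\cup(P\setminus\uparrow x)$, $\Cov$ is subadditive over finite unions, and $\Cov(P\setminus\uparrow x)<\nu\leq\Cov(P)$. Iterating, when $\nu$ is regular, for every $S\subseteq P$ with $|S|<\nu$ the set $\mathrm{ub}(S):=\bigcap_{s\in S}\uparrow s$ of common upper bounds of $S$ satisfies $\Cov(\mathrm{ub}(S))\geq\nu$, because $P=\mathrm{ub}(S)\cup\bigcup_{s\in S}(P\setminus\uparrow s)$ and $\sum_{s\in S}\Cov(P\setminus\uparrow s)\leq|S|\cdot\sup_{s\in S}\Cov(P\setminus\uparrow s)<\nu$ by regularity. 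In particular every subset of $P$ of size $<\nu$ has a common upper bound $z$, and $\uparrow z$ again has covering number $\geq\nu$ and again satisfies the hypothesis of the theorem; this is what lets a recursion of length $\nu$ pass through its limit stages.

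\emph{The successor case.} Write $[\nu]^2=\bigcup_{\alpha<\nu}C_\alpha$ with $C_\alpha:=\{\alpha\}\times(\alpha,\nu)$, so each column $C_\alpha$ is a chain of order type $\nu$ and $(\alpha,\beta)\leq(\alpha',\beta')$ iff $\alpha\leq\alpha'$ and $\beta\leq\beta'$. I would construct an embedding $(\alpha,\beta)\mapsto a_{\alpha,\beta}$ by recursion on the height $\beta<\nu$, adding at stage $\beta$ the ``row'' $a_{0,\beta}<a_{1,\beta}<\cdots<a_{\alpha,\beta}<\cdots$ ($\alpha<\beta$) and maintaining the invariant that the points chosen so far induce the corresponding induced subposet of $[\nu]^2$. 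The point $a_{\alpha,\beta}$ must be (i) a common upper bound of the lower-left region $D_{\alpha,\beta}:=\{a_{\alpha',\beta'}:\alpha'\leq\alpha,\ \beta'<\beta\}$ — which forces it to extend column $\alpha$ and to dominate the columns $<\alpha$ — (ii) above the points $a_{\alpha'',\beta}$ of the current row with $\alpha''<\alpha$, and (iii) incomparable to every already chosen point lying in a strictly higher column. Since $|D_{\alpha,\beta}|\leq|\beta|<\nu$, requirements (i) and (ii) are met by the previous paragraph: common upper bounds exist and their up-sets still have covering number $\geq\nu$, so the construction never runs out of room.

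\emph{The main obstacle.} The delicate step is to satisfy (i) and (ii) \emph{simultaneously with} (iii): each set $\uparrow w$ with $w$ in a higher column itself has covering number $\geq\nu$, so forbidding the candidate $a_{\alpha,\beta}$ to lie in $\uparrow w\cup\downarrow w$ is not a priori harmless. This is exactly where the absence of infinite antichains has to be used. I expect the clean route is to run the recursion in the reformulated form ``every set of size $<\nu$ whose set of common upper bounds has covering number $\geq\nu$ extends to a copy of $[\nu]^2$'' — the inner recursion then being on $\alpha$ — and to invoke Lemma~\ref{lem:metric}(2): along a shortest incomparability path an order-interval $[x,y]$ is contained in a \emph{finite} union $\inc_{x_1}(P)\cup\cdots\cup\inc_{x_n}(P)$, each summand of covering number $<\nu$, so the ``obstructing'' comparabilities between the row being built and the higher columns are confined to a region of small covering number and can be avoided by an inclusion-maximal (pressing-down) choice of upper bound. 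Carrying this out is the heart of the proof.

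\emph{The limit case.} Let $\nu$ be a limit cardinal with $\cf(\nu)=\lambda$ and fix distinct cardinals $\kappa_\alpha^+$ ($\alpha<\lambda$) with supremum $\nu$. Using the first paragraph together with the regularity of each $\kappa_\alpha^+$, I would recursively build a chain $z_0<z_0'\leq z_1<z_1'\leq z_2<\cdots$ in $P$ with $\Cov(P_{\restriction[z_\alpha,z_\alpha']})\geq\kappa_\alpha^+$ for every $\alpha<\lambda$; passing through limit stages is legitimate because a $\cf(\nu)$-indexed sum of cardinals all $<\nu$ is again $<\nu$, which keeps $\Cov(\mathrm{ub}(\{z_\eta:\eta<\xi\}))\geq\nu$ at every limit $\xi<\lambda$. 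Because $z_\alpha'\leq z_{\alpha+1}$, for $\alpha<\alpha'$ every element of $[z_\alpha,z_\alpha']$ lies below every element of $[z_{\alpha'},z_{\alpha'}']$, so $P_{\restriction\bigcup_\alpha[z_\alpha,z_\alpha']}$ is the lexicographical sum $\sum_{\alpha<\lambda}P_{\restriction[z_\alpha,z_\alpha']}$ over the chain $\lambda$. Applying Lemma~\ref{lem:covering2} and then item~1 inside each block $[z_\alpha,z_\alpha']$ (whose covering number is $\geq\kappa_\alpha^+$, a successor cardinal $<\nu$, so case~(2) of Lemma~\ref{lem:covering2} is excluded) yields a copy of $[\kappa_\alpha^+]^2$ or of its dual; after passing to a cofinal subfamily of blocks of one fixed type and re-enumerating, the union of these copies is the desired $\sum_{\alpha<\cf(\nu)}[\kappa_\alpha^+]^2$ or $\sum_{\alpha<\cf(\nu)}([\kappa_\alpha^+]^2)^{*}$; Lemma~\ref{lem:covering3} is what guarantees that blocks with covering numbers exhausting $\nu$ can be found along the chain.
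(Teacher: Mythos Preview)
Your treatment of the limit case is essentially the paper's argument (Facts~\ref{fact1}--\ref{fact3}): small sets are bounded, so one threads an increasing $\cf(\nu)$-chain through $P$ and applies the inductive hypothesis on each slab. One point you gloss over is \emph{why} a $z_\alpha'$ with $\Cov([z_\alpha,z_\alpha'])\geq\kappa_\alpha^+$ exists; the paper obtains it by applying the inductive hypothesis at the level $\kappa_\alpha^{++}$ to $\uparrow z_\alpha$, getting a copy of $[\kappa_\alpha^{++}]^2$ (or its dual), any bounded initial piece of which already has covering number $\geq\kappa_\alpha^+$. Your appeal to Lemma~\ref{lem:covering3} at the end is misplaced---that lemma concerns connected components, not intervals along a chain---but this is a minor slip.

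The real gap is in item~(1). You correctly isolate the obstacle: when placing $a_{\alpha,\beta}$ you must avoid $\uparrow w$ for every already-built $w$ in a higher column, and each such $\uparrow w$ has $\Cov\geq\nu$, so no smallness argument disposes of it. Your gesture toward Lemma~\ref{lem:metric}(2) does not help here: that lemma bounds $\Cov$ of an \emph{interval} $[x,y]$, whereas the forbidden region $\uparrow w\cap\mathrm{ub}(D)$ is a final segment, not an interval, and is large. The paper does \emph{not} attempt a direct row-by-row construction. Instead it proceeds structurally: first, using Theorem~\ref{cofinalthm} on cofinal subsets, it shows $P$ has a cofinal \emph{chain} of regular type $\lambda\geq\nu$ and that every $\uparrow x$ is impure (Lemma~\ref{lem:cofinalchain}); second, it builds a $\lambda$-chain $(J_\alpha)_{\alpha<\lambda}$ of unbounded ideals, each with its own cofinal $\lambda$-chain (Lemma~\ref{lem:secondcofinalchain}); finally it embeds $[\lambda]^2$ by placing $f(\alpha,\beta)$ in $J_\alpha\setminus\bigcup_{\alpha'<\alpha}J_{\alpha'}$ (Lemma~\ref{lem:thirdcofinalchain}). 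The point is that downward closure of the ideals gives one direction of incomparability \emph{for free}: if $\alpha<\alpha'$ then $f(\alpha',\beta')\notin J_\alpha\ni f(\alpha,\beta)$, so $f(\alpha',\beta')\not\leq f(\alpha,\beta)$ automatically. The other direction, $f(\alpha,\beta)\not\leq f(\alpha',\beta')$, is then arranged simply by taking $f(\alpha,\beta)$ far enough along the cofinal $\lambda$-chain of $J_\alpha$, since each $J_\alpha$ is unbounded in $P$. This separation---ideals handle the hard incomparabilities, cofinal chains handle the comparabilities---is exactly the idea missing from your direct approach.
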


The proof of Theorem \ref{thm:covering2} will occupy the next section.

We conclude this section by a deduction of Theorem \ref{thm:covering} from Theorem \ref{thm:covering2}.
\subsection {Proof of Theorem \ref{thm:covering}}
We argue by induction on the cardinality $\nu$. 
Let $P$ be a poset with no infinite antichain such that $\Cov(P)\geq \nu$. 
Let $Q$ be given by  Lemma \ref{lem:covering2}.  If $\Cov (Q_{\restriction C} )<\nu$ for each connected component $C$ of $\Inc(Q)$, then $Q$ contains a sum $\sum_{C_{\alpha}}  P_{\restriction C_{\alpha}}$ where $\Cov(P_{\restriction C_{\alpha}} )\geq \kappa^+_{\alpha}$. Via the induction hypothesis, Item 1 of Theorem \ref{thm:covering} holds for uncountable successor cardinals smaller than $\nu$, hence   $P_{\restriction C_{\alpha}}$ embeds either $[\kappa^+_{\alpha}]^2$ or its dual. It follows that Item 2 holds. If  $Q$ or its dual satisfies  $\Cov(
 Q\setminus \uparrow x)< \nu$ for every $x\in Q$, we may suppose that $\Cov(
 Q\setminus \uparrow x)< \nu$ for every $x\in Q$. We apply then Theorem \ref{thm:covering2}. 

\section{Proof of Theorem \ref {thm:covering2}}
 We suppose that the result holds for uncountable cardinals smaller than $\nu$. 
\subsection{Proof of $(2)$ of Theorem \ref {thm:covering2}.} This  is the simpler one. Let $\theta:= \cf (\nu)$. 

\begin{fact}\label{fact1} Every subset $X\subseteq P$ with $\vert X \vert<\theta$  is bounded above in $P$.
\end{fact}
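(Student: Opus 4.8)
The plan is to exploit the hypothesis $\Cov(P \setminus \uparrow x) < \nu$ together with the regularity of $\theta = \cf(\nu)$ and the absence of infinite antichains. First I would argue that it suffices to handle a single element: if every $x \in P$ has the property that $\inc_x(P)$ together with $\downarrow x$ is "small" in the covering sense, then a finite induction on $|X|$ reduces the general case to the one-point case, because $P$ has no infinite antichain so any subset $X$ generates only finitely many pairwise-incomparable "branches" to control. More precisely, suppose first $X = \{x\}$. Since $P \setminus \uparrow x = \downarrow x \cup \inc_x(P) \setminus \uparrow x$ wait — more carefully, $P = \uparrow x \cup (P \setminus \uparrow x)$, and by hypothesis $\Cov(P \setminus \uparrow x) < \nu$. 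The key point will be that $\Cov(P) \geq \nu$ forces $\Cov(\uparrow x) \geq \nu$, hence in particular $\uparrow x \neq \emptyset$ beyond $x$ itself — but that only gives one upper bound, not that $x$ is below arbitrarily much.

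For a general finite-type argument: let $X \subseteq P$ with $|X| < \theta$. I would consider $U := \bigcap_{x \in X} \uparrow x$, the set of common upper bounds, and show $U \neq \emptyset$. The complement $P \setminus U = \bigcup_{x \in X} (P \setminus \uparrow x)$ is a union of $|X| < \theta = \cf(\nu)$ sets, each of covering number $< \nu$; since $\nu$ is a limit cardinal and $|X| < \cf(\nu)$, the supremum of these $< \nu$ values, taken over a set of size $< \cf(\nu)$, is still $< \nu$ (this is exactly where $\cf(\nu) = \theta$ enters: a sup of $<\theta$ many cardinals each $<\nu$ stays below $\nu$). Hence $\Cov(P \setminus U) < \nu \leq \Cov(P)$, which forces $U \neq \emptyset$ — indeed $\Cov(P_{\restriction U}) \geq \nu$. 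Any element of $U$ is then an upper bound for $X$ in $P$, as desired.

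The main obstacle is the subtraction step $\Cov(P) \leq \Cov(P \setminus U) + \Cov(U)$ being used in the right direction: one needs that if $P = A \cup B$ then $\Cov(P) \leq \Cov(A) + \Cov(B)$ (union of a covering of $A$ by chains with a covering of $B$ by chains covers $P$), so $\Cov(P) \geq \nu$ with $\Cov(P\setminus U)<\nu$ and $\nu$ a limit forces $\Cov(U) \geq \nu$, in particular $U \neq \emptyset$. The only subtlety is confirming that the supremum of $<\theta$ cardinals below the limit cardinal $\nu$ is below $\nu$: this holds precisely because $\theta = \cf(\nu)$, so a set of ordinals of size $<\theta$ below $\nu$ cannot be cofinal in $\nu$ and hence is bounded below $\nu$. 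No use of the no-infinite-antichain hypothesis is actually needed for this Fact; it is recorded here because it will be combined later with that hypothesis.
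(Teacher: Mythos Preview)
Your second and third paragraphs contain the correct argument and it is essentially the paper's own proof: if $X$ had no upper bound then $P=\bigcup_{x\in X}(P\setminus\uparrow x)$, a union of $\vert X\vert<\theta=\cf(\nu)$ sets each with covering number $<\nu$, whence $\Cov(P)<\nu$, a contradiction. The detour in your first paragraph (finite induction on $\vert X\vert$, invoking the no-infinite-antichain hypothesis) is unnecessary and you rightly abandon it; as you note at the end, this Fact uses only the covering hypotheses and $\theta=\cf(\nu)$.
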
 
\noindent {\bf Proof of Fact \ref{fact1}.}
Otherwise $P= \bigcup_{x\in X} (P\setminus \uparrow x)$. Since by hypothesis $\Cov (P\setminus \uparrow x)<\nu$, we must have $\vert X\vert \geq \cf(\nu)= \theta$, a contradiction. \hfill $\Box$
 
 A subset $A$ of $P$ is \emph{cofinal} in $P$ if every element of $P$ is majorized by some element $a\in A$, i.e, $\downarrow A=P$. The \emph{cofinality} of $P$ is the least cardinality of a cofinal subset of $P$, we denote it by $cf (P)$.  
 
  \begin{fact}\label{fact2}  $cf(P)\geq \theta$. 
 \end{fact}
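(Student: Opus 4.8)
The plan is to argue by contradiction: suppose $\cf(P) < \theta$, and derive that $\Cov(P) < \nu$, contradicting the hypothesis. So fix a cofinal subset $A \subseteq P$ with $|A| < \theta = \cf(\nu)$. First I would observe that since every $x \in P$ lies below some $a \in A$, we have $P = \bigcup_{a \in A} {\downarrow a} = \bigcup_{a \in A} (P \setminus \uparrow a)^c$ — wait, that is not quite the right decomposition; rather the point is that $P = \bigcup_{a \in A} {\downarrow a}$, and I want to bound $\Cov({\downarrow a})$ for each $a$. The hypothesis gives control on $\Cov(P \setminus \uparrow x)$, not directly on $\Cov({\downarrow a})$, so the key step is to relate these. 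Note $P = {\downarrow a} \cup (P \setminus {\downarrow a})$ is not obviously useful either; instead, the natural move is: for $a \in A$, we have ${\downarrow a} \subseteq P$, but more usefully $P \setminus \uparrow a \supseteq$ the set of elements not above $a$, and every element of $P$ is either $\leq a$ (hence in ${\downarrow a}$) or incomparable to $a$ or $> a$.

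Let me restructure. The cleaner approach: since $A$ is cofinal and $|A| < \theta$, by Fact \ref{fact1} applied to $A$ itself, $A$ is bounded above by some $b \in P$. Then $b$ is an upper bound of a cofinal set, so $\uparrow b$ together with ${\downarrow b}$... no. The truly clean statement: if $A$ is cofinal with $|A| < \theta$ and (by Fact \ref{fact1}) $A$ has an upper bound $b$, then $b$ majorizes every element of $P$, i.e., $P = {\downarrow b}$. But then consider any $x > b$ — there is none unless $x = b$, so $b$ is the greatest element of $P$. Now take any element $x_0$; then $P = (P \setminus \uparrow x_0) \cup \uparrow x_0$ and $\uparrow x_0 \ni b$... this still needs work to bound $\Cov(\uparrow x_0)$.

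The decisive idea I would pursue is this: if $P$ has a greatest element $b$, pick any non-greatest $x$; actually the right thing is to show $P$ cannot have cofinality $< \theta$ by showing that a greatest element would force $\Cov(P) = \Cov(P \setminus \{b\}) \leq \Cov(P\setminus \uparrow x)$ for a suitable $x$ — but that requires $P \setminus \{b\} = P \setminus \uparrow x$ for some $x$, which holds only in degenerate cases. So instead: the honest route is to cover $P = \bigcup_{a \in A} {\downarrow a}$ and prove $\Cov({\downarrow a}) < \nu$ for each $a$. For this, note ${\downarrow a} = P \setminus \{x : x \not\leq a\}$; the elements $x \not\leq a$ are exactly those in $\uparrow(\text{something}) \cup \inc_a(P)$... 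Since ${\downarrow a} \subseteq P \setminus \uparrow x$ whenever $x$ is incomparable to $a$ or $x \not\leq a$ — in particular, if there exists $x_0$ with $a < x_0$ then ${\downarrow a} \subseteq P \setminus \uparrow x_0$, so $\Cov({\downarrow a}) \leq \Cov(P \setminus \uparrow x_0) < \nu$. If $a$ is maximal in $P$, then $a$ being in a cofinal set of size $< \theta$... one handles the (few) maximal elements of $A$ separately; since $P$ has no infinite antichain there are only finitely many maximal elements among any antichain, and the maximal elements of $A$ form an antichain, hence finitely many, so they contribute finitely many chains. Thus $\Cov(P) \leq |A| \cdot \sup_{a} \Cov({\downarrow a}) < \nu$ since $|A| < \theta = \cf(\nu)$ and each term is $< \nu$ — contradiction.

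The main obstacle I anticipate is the bookkeeping around maximal elements of $A$ (or of $P$): an $a \in A$ that is maximal in $P$ has no $x_0 > a$, so the clean inclusion ${\downarrow a} \subseteq P \setminus \uparrow x_0$ fails, and one must instead invoke the no-infinite-antichain hypothesis to see that such $a$ are finite in number and so can be absorbed. Once that is handled, the cardinal arithmetic $|A| \cdot \sup < \nu$ is immediate from $|A| < \cf(\nu)$ and regularity of $\cf(\nu)$.
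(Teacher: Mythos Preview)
Your final approach---covering $P=\bigcup_{a\in A}\downarrow a$ and bounding each $\Cov(\downarrow a)$---is workable, but there is a genuine gap in the treatment of the maximal elements. You write that the finitely many maximal $a$'s ``contribute finitely many chains'', but that is not what you need: you still have to cover all of $\downarrow a$, not just the single point $a$. The fix is easy once noticed: if $a$ is maximal in $P$ then $\uparrow a=\{a\}$, so $\downarrow a\subseteq (P\setminus\uparrow a)\cup\{a\}$ and hence $\Cov(\downarrow a)\leq \Cov(P\setminus\uparrow a)+1<\nu$. With that correction your cardinal arithmetic (fewer than $\cf(\nu)$ summands, each $<\nu$) goes through. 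Also, be careful: the relevant elements are those $a\in A$ that are maximal in $P$, not the maximal elements of $A$; it is the former that form an antichain in $P$ and are therefore finite.

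That said, the paper's proof is much shorter, and it is precisely the idea you sketched in your ``cleaner approach'' paragraph and then abandoned. The missing step that made you give up is a one-liner: $P$ has \emph{no} maximal element at all. Indeed, if $m$ were maximal then $\uparrow m=\{m\}$, so $\Cov(P)\leq \Cov(P\setminus\uparrow m)+1<\nu$, contradicting $\Cov(P)\geq\nu$. Once you know that, the rest is immediate: a cofinal subset $A$ cannot be bounded above (an upper bound would be a greatest, hence maximal, element), so by Fact~\ref{fact1} we must have $\vert A\vert\geq\theta$. No decomposition into $\downarrow a$'s, no bookkeeping with maximal elements, is needed.
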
 
 \noindent {\bf Proof of Fact \ref{fact2}.}
Since $\Cov (P)\geq \nu$  and $\Cov(P\setminus \uparrow x)< \nu$ for every $x\in P$, $P$ has no maximal element. In particular it has no largest element. Thus a sequence $(x_{\alpha})_{\alpha<\cf(P)}$  of elements of $P$ which is cofinal in $P$ cannot be bounded above in $P$, so from Fact 1 its cardinality  is at least $\theta$.  \hfill $\Box$

Let $(\kappa_{\alpha}^+)_{\alpha< \theta}$ be a sequence  of cardinal numbers, all less  than $\nu$,  such that  $\bigvee \{\kappa_{\alpha}^+: \alpha<\theta\} = \nu$.  

\begin{fact}\label {fact3} 
There is an increasing  sequence $(x_{\alpha})_{\alpha< \theta}$ of elements of $P$ such that $\Cov ([x_{\alpha}, x_{\alpha+1} [) \geq \kappa^{+}_{\alpha}$  for every $\alpha <\theta$. 
\end{fact}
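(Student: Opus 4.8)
\medskip

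The plan is to build the increasing sequence $(x_\alpha)_{\alpha<\theta}$ by transfinite recursion on $\alpha<\theta$, at each step choosing $x_{\alpha+1}>x_\alpha$ so that the half-open interval $[x_\alpha,x_{\alpha+1}[$ already carries a chain covering number $\geq\kappa_\alpha^+$, and at limit stages choosing $x_\lambda$ to be an upper bound of $\{x_\alpha:\alpha<\lambda\}$, which exists by Fact \ref{fact1} since $\lambda<\theta$. The crux of the construction is the successor step, and this is where I expect the main obstacle to lie: given $x_\alpha$, I must produce some $y>x_\alpha$ with $\Cov(P_{\restriction[x_\alpha,y[})\geq\kappa_\alpha^+$. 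The strategy is a pigeonhole argument on the set $U:=P_{\restriction\uparrow x_\alpha}$. First I would observe that $\Cov(U)\geq\nu$: indeed $P=\downarrow x_\alpha\cup\inc_{x_\alpha}(P)\cup U$, but here I should be careful — this decomposition was used in Claim \ref{claim2}(a), and I want to argue that the ``$\inc$'' and ``$\downarrow$'' parts are too small. Rather than reprove that, the cleaner route is: since $\Cov(P\setminus\uparrow x)<\nu$ for all $x$, in particular $P$ has no maximal element (Fact \ref{fact2}), so for the fixed $x_\alpha$ we want $\Cov(U)\geq\nu$; if instead $\Cov(U)<\nu$, then together with the hypothesis on $P\setminus\uparrow x_\alpha$ we would get $\Cov(P)<\nu$ (taking $\nu$ regular would suffice, but here $\nu$ is a limit cardinal of cofinality $\theta$, and a union of two sets each of covering number $<\nu$ has covering number $<\nu$ since $\nu$ is a limit cardinal, hence closed under the relevant sup of two terms) — contradiction. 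So $\Cov(U)\geq\nu>\kappa_\alpha^+$.

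\medskip

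Now I work inside $U=P_{\restriction\uparrow x_\alpha}$, which has $x_\alpha$ as least element. For each $y\in U$ with $y\neq x_\alpha$, I want to bound $\Cov(U_{\restriction([x_\alpha,y[))}$ and the ``rest''. Write $U=[x_\alpha,y[\ \cup\ \{y\}\cup\inc_y(U)\cup (U_{\restriction\uparrow y}\setminus\{y\})\cup\{z\in U: z\not\leq y, z\not\geq y\}$ — more precisely, $U=[x_\alpha,y]\cup\inc_y(U)\cup(\uparrow_U y)$, where $\uparrow_U y=U_{\restriction\uparrow y}$. By the hypothesis of the theorem applied to $P$, $\Cov(P\setminus\uparrow y)<\nu$, and $U\setminus\uparrow_U y\subseteq P\setminus\uparrow y$, so $\Cov(U\setminus\uparrow_U y)<\nu$; but $[x_\alpha,y[\ \subseteq U\setminus\uparrow_U y$, so on its own $[x_\alpha,y[$ has covering number $<\nu$ — that is not yet the bound $\kappa_\alpha^+$ I need, so this alone is insufficient. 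The key additional ingredient must be that $\Cov(\inc_y(P))<\nu$ is available only after first passing to the reduced subposet of Claim \ref{claim1}; but Theorem \ref{thm:covering2} does not assume that. So the genuine content of the successor step must instead exploit that $\Cov(U_{\restriction\uparrow_U y})\geq\nu$ for a suitable $y$, combined with a counting/cofinality argument: by Fact \ref{fact2} applied to $U$ (which also has no maximal element and satisfies $\Cov(U\setminus\uparrow_U y)<\nu$ — here $U\setminus\uparrow_U y\supseteq$ is the concern), $\cf(U)\geq\theta$.

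\medskip

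Putting it together: since $\cf(U)\geq\theta$, fix a cofinal increasing-type sequence, or rather an unbounded set $\{y_\beta:\beta<\theta\}$ in $U$; the ``slices'' $U\setminus\uparrow_U y_\beta$ exhaust $U$ as $\beta\to\theta$. Since $\Cov(U)\geq\nu=\bigvee_\gamma\kappa_\gamma^+$ and each slice has covering number $<\nu$, and since $\kappa_\alpha^+<\nu$, there must exist $\beta$ (indeed a cofinal set of $\beta$'s) with $\Cov(U\setminus\uparrow_U y_\beta)\geq\kappa_\alpha^+$: otherwise all slices have covering number $<\kappa_\alpha^+$ and $U$, being their directed union along a cofinal $\theta$-sequence, would have $\Cov(U)\leq\kappa_\alpha^+\cdot\theta<\nu$ — wait, this needs $\kappa_\alpha^+\cdot\theta<\nu$, which holds since both factors are $<\nu$ and $\nu$ is a limit cardinal. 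Now $U\setminus\uparrow_U y_\beta\subseteq [x_\alpha,y_\beta[\ \cup\ \inc_{y_\beta}(U)$; I must still handle $\inc_{y_\beta}(U)$, and this is exactly the point where I would invoke Lemma \ref{lem:metric}(2): since $P$ has no infinite antichain, $\Inc(P)$ has finite-diameter components, and iterating the argument $[x_\alpha,y_\beta]\subseteq\inc_{z_1}(P)\cup\dots\cup\inc_{z_n}(P)$ lets me reduce to the case where the interval itself is the carrier. So after possibly replacing $y_\beta$ by an element on a shortest $\Inc$-path from $x_\alpha$, I obtain $\Cov([x_\alpha,y_\beta[)\geq\kappa_\alpha^+$, and I set $x_{\alpha+1}:=y_\beta$. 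The main obstacle, as flagged, is controlling the $\inc_y(P)$-term cleanly; I expect the honest argument routes through Lemma \ref{lem:metric}(2) together with the no-infinite-antichain hypothesis exactly as in the proof of Claim \ref{claim2}(b), so that the interval $[x_\alpha,x_{\alpha+1}[$ absorbs the covering number that the full ``downward-from-$y$'' region would have carried.
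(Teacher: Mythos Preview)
Your transfinite recursion has the right outline, and the limit step (bound the tail using Fact~\ref{fact1}) matches the paper exactly. The successor step, however, diverges from the paper's argument and contains a real gap.

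The paper's successor step is short because it invokes the overarching induction hypothesis on $\nu$: Theorems~\ref{thm:covering} and~\ref{thm:covering2} are proved by simultaneous induction on the cardinal, so at this point Theorem~\ref{thm:covering} is already available for every successor cardinal below $\nu$. Since $\Cov(\uparrow x_\alpha)\geq\nu>\kappa_\alpha^{++}$, that hypothesis plants a copy of $[\kappa_\alpha^{++}]^2$ or of its dual inside $\uparrow x_\alpha$; inside such a copy one immediately finds $a<b$ with $\Cov([a,b[)\geq\kappa_\alpha^+$ (the initial copy of $[\kappa_\alpha^+]^2$ lies below $(\kappa_\alpha^+,\kappa_\alpha^++1)$), and one sets $x_{\alpha+1}:=b$. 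You never call on this induction hypothesis, and without it your successor step does not close.

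Concretely, your step~6 fails. Granting that $\Cov(U\setminus\uparrow_U y)\geq\kappa_\alpha^+$ for some $y$, you decompose $U\setminus\uparrow_U y=[x_\alpha,y[\;\cup\;\bigl(\inc_y(P)\cap U\bigr)$ and try to force the large covering number onto the interval. But the hypotheses of Theorem~\ref{thm:covering2} give only $\Cov(\inc_y(P))<\nu$, not $<\kappa_\alpha^+$; nothing prevents the entire $\kappa_\alpha^+$ from living in the $\inc$-part. Your appeal to Lemma~\ref{lem:metric}(2) and to the method of Claim~\ref{claim2}(b) is in the wrong direction: those results produce \emph{upper} bounds on $\Cov([x,y])$ by covering the interval with finitely many sets $\inc_{x_i}(P)$; they cannot manufacture a \emph{lower} bound on $\Cov([x_\alpha,y[)$, and ``replacing $y$ by an element on a shortest $\Inc$-path'' does not reverse this. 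There is also a smaller gap in step~5: Fact~\ref{fact2} yields $\cf(U)\geq\theta$, i.e.\ every cofinal subset has size at least $\theta$; it does not hand you an unbounded subset of size exactly $\theta$, and if $\cf(U)>\theta$ such a set need not exist.
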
 
\noindent {\bf Proof of Fact \ref{fact3}.} Pick $x_0$ arbitrary in $P$. Since $\Cov(P)\geq \nu$ and $\Cov (P \setminus \uparrow x_0)  \leq \nu$, $\Cov (\uparrow x) \geq \nu > \kappa_{0}^{++}$. Via the induction hypothesis on $\nu$, $\uparrow x$ contains  a copy of $[\kappa_{\alpha}^{++}]^2$ or its reverse, hence there is some element $x_1\in P$ such that $\Cov ([x_0, x_1[)\geq \kappa^{+}_0$. Suppose $(x_{\alpha})_{\alpha< \mu}$ be defined up to some $\mu<\theta$. Apply Fact \ref{fact1}, get an element $x_{\mu}$ majorizing all the $x_{\alpha}'s$. \hfill $\Box$. 

Applying the induction hypothesis, we get that each interval $[x_{\alpha}, x_{\alpha+1}[$ contains a copy of $[\kappa^{+}_{\alpha}]^2$ or of his dual. Statement $(2)$ follows. 

\subsection{Proof of $(1)$ of Theorem \ref {thm:covering2}.} 
The proof relies on the Erd\"os-Dushnik-Miller's partition theorem, on  a   description of cofinal subsets of posets with no infinite antichains due to the second author \cite{pouzet} (see  \cite{milner-prikry, milner-pouzet1, fraisse})  and  on the notion of  purity and Theorem 24 of \cite{assous-pouzet}.

\begin{theorem}\label{thm:EDM}  Erd\"os-Dushnik-Miller's partition theorem \cite{dushnik-miller} $$\lambda\rightarrow (\aleph_0, \lambda)^2$$ for every infinite cardinal $\lambda$. That is,  every graph with $\lambda$ vertices contains either an  infinite independent set or   a complete subgraph on $\lambda$ vertices. 
\end{theorem}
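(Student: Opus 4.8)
The plan is to establish the following equivalent form: if a graph $G=(V,E)$ with $|V|=\lambda$ has no infinite independent set, then $G$ has a clique (complete subgraph) of size $\lambda$. I would argue by induction on the cardinal $\lambda$, the base case $\lambda=\aleph_0$ being exactly the infinite Ramsey theorem. First I would record the one ingredient used throughout: if $G$ has no infinite independent set, then every $A\subseteq V$ with $|A|=\lambda$ contains a vertex $a$ with $|N(a)\cap A|=\lambda$, where $N(a)$ is the neighbourhood of $a$. Indeed, if every $a\in A$ had fewer than $\lambda$ neighbours inside $A$, one could choose $a_0,a_1,\dots$ in $A$ recursively with $a_n\notin\{a_0,\dots,a_{n-1}\}\cup\bigcup_{k<n}N(a_k)$ --- legitimate since the forbidden set, a finite union of sets of size $<\lambda$, has size $<\lambda$ --- and $\{a_n:n<\omega\}$ would be an infinite independent set.

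The regular case ($\lambda$ regular, hence uncountable) is the conceptual core. Identify $V$ with the ordinal $\lambda$ and put $S:=\{\alpha<\lambda:\{\beta,\alpha\}\in E\text{ for all }\beta<\alpha\}$, the set of vertices adjacent to all of their predecessors. If $S$ is stationary, then $S$ is itself a clique (for $\alpha<\alpha'$ in $S$ one has $\{\alpha,\alpha'\}\in E$ since $\alpha'\in S$) and $|S|=\lambda$, so we are done. If $S$ is not stationary, fix a club $C$ with $C\cap S=\emptyset$ and define a regressive function on $C$ by letting $g(\alpha)$ be the least $\beta<\alpha$ with $\{\beta,\alpha\}\notin E$; Fodor's pressing-down lemma gives a stationary $T\subseteq C$ on which $g$ is constant, say with value $\beta_0$, so the vertex $\beta_0$ has at least $\lambda$ non-neighbours (all of $T$). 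Now replace $V$ by the set of non-neighbours of $\beta_0$ --- a set of size $\lambda$, again identified with $\lambda$ --- and repeat. This loop cannot run for $\omega$ steps, because the successive ``defect'' vertices $\beta_0$ it produces are pairwise non-adjacent (each lies in the common non-neighbourhood of all the earlier ones), contradicting the absence of an infinite independent set; so after finitely many steps the stationary alternative occurs and yields a clique of size $\lambda$.

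For $\lambda$ singular, set $\mu:=\cf(\lambda)$ and fix regular cardinals $\lambda_i\nearrow\lambda$ for $i<\mu$ with $\mu<\lambda_0$; partition $V=\bigsqcup_{i<\mu}V_i$ with $|V_i|=\lambda_i$. No restriction $G_{\restriction V_i}$ has an infinite independent set and $\lambda_i<\lambda$, so the inductive hypothesis (the regular case) gives a clique $C_i\subseteq V_i$ with $|C_i|=\lambda_i$. It then remains to amalgamate the $C_i$ into one clique of size $\lambda$. I would build an increasing continuous chain of cliques $W_i$, $i\le\mu$, with $W_0=\emptyset$: at step $i<\mu$, inside the set $N^{*}(W_i)$ of vertices adjacent to every element of $W_i$ one picks a clique of size $\lambda_i$ and adjoins it. Since $|W_i|<\lambda$ for $i<\mu$, one wants $|N^{*}(W_i)|=\lambda$ throughout, and this is precisely where the no-infinite-independent-set hypothesis re-enters: it controls how much $N^{*}$ can shrink in a single step, since too drastic a loss would leave behind a vertex with many non-neighbours feeding into the argument above. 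Granting this, $W_\mu$ is a clique of size $\sum_{i<\mu}\lambda_i=\lambda$.

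The step I expect to be the genuine obstacle is this amalgamation in the singular case: keeping each reservoir $N^{*}(W_i)$ of full size $\lambda$ along the recursion of length $\mu$ forces one to choose the adjoined sub-clique of $N^{*}(W_i)$ with care --- an arbitrary $\lambda_i$-clique need not work --- and the bookkeeping showing that any failure manufactures an infinite independent set is the technical heart. By contrast the base case is immediate and the regular case is settled cleanly by the pressing-down argument.
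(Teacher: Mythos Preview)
The paper does not prove this theorem: it is stated with a citation to \cite{dushnik-miller} and used as a black box, so there is no in-paper argument to compare against.

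On the merits of your sketch: the regular uncountable case via pressing-down is correct (the phrase ``$\lambda$ regular, hence uncountable'' is a slip, but you clearly intend $\lambda>\aleph_0$ since $\aleph_0$ is your base case). It is heavier than needed --- once your ``ingredient'' is in hand, the trivial dichotomy ``either some vertex has $\lambda$ non-neighbours (restrict to them and repeat; this terminates since the successive defect vertices are independent) or every vertex has $<\lambda$ non-neighbours (build the clique greedily, regularity handling limit stages)'' already finishes without Fodor --- but your argument is valid.

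The singular case is where the genuine gap lies, and your own ``granting this'' signals it. The difficulty is not merely bookkeeping. When you adjoin a $\lambda_i$-clique $K_i$ to $W_i$, the new common neighbourhood is $N^{*}(W_i)\cap\bigcap_{k\in K_i}N(k)$; since $|K_i|=\lambda_i\geq\mu=\cf(\lambda)$, its complement is a union of $\lambda_i$ sets each of size $<\lambda$, which can perfectly well have size $\lambda$. A failure here does \emph{not} manufacture an infinite independent set in any direct way, so the mechanism you invoke is not available. The classical repair is to iterate your ingredient finitely many times to reach $A\subseteq V$ of size $\lambda$ in which \emph{every} vertex has $<\lambda$ non-neighbours, and then stratify $A$ by $\operatorname{rk}(a)=\min\{\,i<\mu:|A\setminus N(a)|<\lambda_i\,\}$. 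If some fibre $\operatorname{rk}^{-1}(i)$ has size $\lambda$, then inside it every vertex has $<\lambda_i$ non-neighbours, and now the greedy construction runs all the way to $\lambda$ (at stage $\xi<\lambda$ the forbidden set has size at most $|\xi|\cdot\lambda_i<\lambda$); if all fibres are small, a short further argument is required. Without some device producing a \emph{uniform} bound on non-neighbourhood sizes, the recursion you describe can collapse at the first limit stage of cofinality $\geq\mu$.
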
 
\begin{theorem}\label{cofinalthm} Every poset with no infinite antichain has a cofinal subset which is isomorphic to a finite direct sum $A_1 \bigoplus\dots \bigoplus A_k$ where each $A_i$ is a finite product $\beta_{1,i}\times \dots \times \beta_{n_i, i}$  of regular distinct ordinals. 
\end{theorem}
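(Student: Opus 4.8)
The plan is to prove the statement in three stages: reduce to the case of a single directed poset; settle that case by induction on cardinality, producing a product of regular ordinals; and normalise the factors obtained.

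\textbf{Stage 1: reduction to directed posets.} Call a nonempty up-directed down-closed subset of $P$ an \emph{ideal}. The union of a $\subseteq$-chain of ideals is an ideal, so by Zorn's lemma every ideal extends to a maximal one; since $\downarrow x$ is an ideal for each $x$, the maximal ideals cover $P$. The key point — a classical fact about posets with no infinite antichain, see \cite{pouzet,fraisse} — is that such a $P$ has only \emph{finitely many} maximal ideals: from an infinite set of pairwise distinct maximal ideals one extracts an infinite antichain by a selection argument exploiting directedness of the ideals. Let $M_1,\dots,M_k$ be the maximal ideals and put $N_i:=M_i\setminus\bigcup_{j\neq i}M_j$. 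For each $j\neq i$ fix $a_j\in M_i\setminus M_j$ (possible since $M_i\not\subseteq M_j$); given $x\in M_i$, directedness yields $y\in M_i$ above $x$ and all the $a_j$, and then $y\notin M_j$ for every $j\neq i$ (otherwise $a_j\in M_j$), so $y\in N_i$ and $N_i$ is cofinal in $M_i$. If $x\in N_i$ and $y\in N_j$ with $i\neq j$, then $x\le y$ would put $x$ in $M_j$ and $y\le x$ would put $y$ in $M_i$, so $x,y$ are incomparable. Hence $\bigcup_i N_i$ is a cofinal subset of $P$ that induces the direct sum $N_1\bigoplus\cdots\bigoplus N_k$, each $N_i$ being directed with no infinite antichain. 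So it suffices to treat one directed poset $D$ with no infinite antichain.

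\textbf{Stage 2: the directed case --- the main obstacle.} One must show that such a $D$ has a cofinal subset isomorphic to a finite product of regular ordinals. I would induct on $|D|$. If $D$ has a greatest element, take it; if $D$ is countable with no greatest element, a strictly increasing cofinal $\omega$-sequence built by directedness gives a cofinal chain of type $\omega$. In general put $\kappa:=\cf(D)$, a regular cardinal; the crux is to split off one factor equal to $\kappa$. One produces a cofinal chain $(x_\alpha)_{\alpha<\kappa}$ and studies the sections $[x_\alpha,x_{\alpha+1}[$ and the tails $\uparrow x_\alpha$, using the Erd\"os-Dushnik-Miller partition theorem (Theorem \ref{thm:EDM}) and the absence of infinite antichains to control their structure; from this one obtains a cofinal subset of $D$ isomorphic to $\kappa\times D'$ for a directed poset $D'$ with no infinite antichain and $|D'|<|D|$ (with $D'$ trivial when $D$ is ``cofinally a chain''). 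The induction hypothesis applied to $D'$, followed by regrouping of factors, gives the product form. The intricate bookkeeping needed to genuinely extract such a product decomposition, and to keep $|D'|$ strictly below $|D|$, is where the real difficulty lies; it is carried out in \cite{pouzet} (see also \cite{milner-prikry,milner-pouzet1,fraisse}).

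\textbf{Stage 3: normalisation.} Suppose a cofinal subset of $P$ has the form $A_1\bigoplus\cdots\bigoplus A_k$ with each $A_i$ a finite product $\beta_{1,i}\times\cdots\times\beta_{n_i,i}$ of ordinals. Since a product of cofinal subchains is cofinal in the product, replacing each $\beta_{j,i}$ by a cofinal subset of order type $\cf(\beta_{j,i})$ turns every factor into a regular ordinal; finite factors $\geq 2$ are not regular, but $2\times\gamma$ has the cofinal copy $\{1\}\times\gamma\cong\gamma$, so such factors are absorbed whenever an infinite factor is present, while a purely finite product means $D$ had a greatest element and reduces to a point. Finally, if a regular $\lambda$ occurs twice within some $A_i$, then by regularity the diagonal $\{(\xi,\xi):\xi<\lambda\}$ is cofinal in $\lambda\times\lambda$ and isomorphic to $\lambda$, so one of the two copies can be deleted; iterating, each $A_i$ becomes a product of pairwise distinct regular ordinals, which is the asserted form.
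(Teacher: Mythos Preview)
The paper does not give its own proof of this theorem; it is quoted as a known structural result due to the second author and referenced to \cite{pouzet} (see also \cite{milner-prikry, milner-pouzet1, fraisse}). Your proposal therefore already supplies more detail than the paper does, and there is no in-paper argument to compare against.

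On the substance of your sketch: the three-stage outline --- split $P$ into finitely many directed summands via the maximal ideals, handle each directed piece by extracting a product of regular ordinals, then normalise to make the factors distinct --- is indeed the shape of the argument in the cited sources. Stages~1 and~3 are carried out cleanly; the cofinality-of-$N_i$ argument and the diagonal trick for collapsing a repeated factor $\lambda\times\lambda$ are both correct. You are right to flag Stage~2 as the real content and to defer it to \cite{pouzet, milner-prikry, milner-pouzet1}; since the paper itself simply invokes those references, your treatment is consistent with it. One caution: the induction in those sources is not literally on $|D|$ as you phrase it --- there is no reason the residual piece $D'$ should have strictly smaller cardinality --- but rather on a finer invariant tied to the cofinal structure (and termination uses the finite-antichain hypothesis again). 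Your closing sentence in Stage~2 already concedes that this bookkeeping is where the difficulty lies, so this does not undermine your plan, but the phrase ``$|D'|<|D|$'' should not be taken at face value.
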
 
\begin{definition}\label{def:pure} A  poset $P$ is \emph{pure} if every proper initial segment $I$ of $P$ is strictly  bounded above (that is some $x\in P\setminus I$ dominates $I$). 
 \end{definition}
 
 This condition amounts to the fact that every non cofinal subset of $P$ is strictly bounded above (indeed, if a subset $A$ of $P$ is not cofinal, then $\downarrow A\not = P$ hence from purity, $\downarrow A$, and thus $A$, is strictly bounded above. The converse is immediate). 

%An equivalent condition is this:
%
%$\bullet $ \emph{For every $x\in P$ there is some $y\geq x$ such that $P\setminus \uparrow x\subseteq \downarrow y$}. 
 We recall
 
 \begin{theorem}\label{theo:pure}
Let $P$ be a  poset with  infinite  uncountable cofinality $\nu$. Then $P$ is pure if and only if  $P$ is a lexicographical  sum $\sum_{a\in K }P_{a}$ where $K$  is  a chain of order type $\nu$. \end{theorem}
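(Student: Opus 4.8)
The statement to prove is Theorem~\ref{theo:pure}: a poset $P$ with uncountable cofinality $\nu$ is pure if and only if it is a lexicographical sum $\sum_{a\in K}P_a$ over a chain $K$ of order type $\nu$.

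\textbf{Plan.} The plan is to prove both implications directly from the definition of purity (every proper initial segment is strictly bounded above).

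For the ``if'' direction, suppose $P=\sum_{a\in K}P_a$ with $K$ a well-ordered chain of type $\nu$. First I would observe that $\cf(P)=\cf(K)=\nu$ since $\nu$ is a limit ordinal (a cofinal set in the sum must meet cofinally many blocks $P_a$, and conversely picking one point per block along a cofinal subset of $K$ gives a cofinal subset of $P$). Now let $I$ be a proper initial segment of $P$. Since $I\ne P$, there is $x\in P\setminus I$, say $x\in P_{a_0}$. Because $K$ has order type $\nu$ and $\nu$ is uncountable (in particular a limit ordinal with no largest element, and $a_0<\nu$), there is $a_1\in K$ with $a_0<a_1$; pick any $y\in P_{a_1}$. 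I claim $y$ strictly dominates $I$: indeed for any $z\in I$, writing $z\in P_b$, we must have $b\le a_0$ (if $b>a_0$ then $b\ge a_1>a_0$... more carefully: if $b\geq a_1$ then $z\geq x'$ for the chosen $x$ only when $b\geq a_0$, so I need to argue that $I$ cannot contain any element of a block $P_b$ with $b\geq a_0$ — this uses that $I$ is an initial segment and $x\in P_{a_0}\setminus I$, so if $z\in P_b$ with $b>a_0$ then $z>x$, contradicting $x\notin I$; and if $b=a_0$ and $z\notin P_{a_0}\cap I$ then still $z$ could be incomparable to $x$). The cleanest fix: since $x\notin I$ and $I$ is an initial segment, no element $>x$ lies in $I$, so every $z\in I$ satisfies $z\in P_b$ with $b\le a_0$. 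Then $y\in P_{a_1}$ with $a_1>a_0\ge b$ gives $z<y$, so $y$ strictly bounds $I$. Hence $P$ is pure.

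For the ``only if'' direction, suppose $P$ is pure with $\cf(P)=\nu$ uncountable. By the folklore lemma quoted just before Lemma~\ref{lem:covering3}, $P=\sum_{i\in D}P_i$ where the $P_i$ are the connected components of $\Inc(P)$ and $D$ is a chain. The task is to show $D$ has order type $\nu$, i.e. $D$ is well ordered of order type $\nu$. First, purity passes to the quotient chain: if $J$ is a proper initial segment of $D$, then $\bigcup_{i\in J}P_i$ is a proper initial segment of $P$, hence strictly bounded above by some $x\in P_{i_0}$ with $i_0\notin J$, which forces $J$ to be bounded above in $D$ by $i_0$ (every $i\in J$ satisfies $i<i_0$ in $D$). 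So every proper initial segment of $D$ is strictly bounded above in $D$ — but $D$ is a chain, so this says $D$ has no ``gap from the left without an immediate strict upper bound'', which for a chain is exactly the statement that $D$ is well ordered. (If $D$ were not well ordered it would contain a strictly decreasing $\omega$-sequence $d_0>d_1>\cdots$; then $J:=\{i\in D: i<d_n \text{ for all }n\}$ is a proper initial segment with no strict upper bound below every $d_n$... I need to check this $J$ is the right non-strictly-bounded initial segment. Alternatively and more simply: the set $\{i : i \le d_n\text{ for some }n\}$ is an initial segment whose strict upper bounds are exactly the common strict upper bounds of the $d_n$, and purity forces one to exist, which is fine — so instead take $J=\{i: i<d_n\ \forall n\}$; if $J$ had a strict upper bound $e$, then $e\geq d_n$ for all $n$ is false in general. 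Let me not over-engineer: the decreasing sequence gives a proper initial segment, namely the down-closure of its complement's... ) The robust argument: a chain is well ordered iff it has no infinite strictly descending sequence; given such a sequence, the initial segment $I=\{i\in D : i < d_n \text{ for every }n\}$ is proper (it omits each $d_n$), and any strict upper bound $e$ of $I$ satisfies: for each $n$, $d_n\notin I$, so $d_n$ is not below all $d_m$, which is automatic — this doesn't immediately contradict. The correct classical fact I will invoke is simply: \emph{a chain in which every proper initial segment is strictly bounded above is well ordered}, proven by noting the set of strict upper bounds of any proper initial segment is nonempty, taking $I$ to be the down-closure of a nonempty subset $S$ with no least element, and deriving that a strict upper bound of $I$ would have to be a least element of $S$ or lie in $I$ — contradiction. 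Once $D$ is well ordered, say of order type $\eta$, I use $\cf(P)=\nu$: a cofinal subset of $P$ projects to a cofinal subset of $D$, and conversely, so $\cf(D)=\nu$, whence (as $D$ is an ordinal) $\cf(\eta)=\nu$. Finally I must upgrade ``$\cf(\eta)=\nu$'' to ``$\eta=\nu$'' — but these are \emph{not} equal in general (e.g. $\eta=\nu+\nu$). The resolution is that purity is stronger: purity of $D$ means every proper initial segment is \emph{strictly} bounded, so $D$ has no largest element and, crucially, $D$ cannot be decomposed with a cofinal part of order type $<\eta$; more precisely purity forces $D$ to be \emph{additively indecomposable up to the top}, and combined with $\cf(D)=\nu$ and $\nu$ uncountable one concludes $\eta=\nu$. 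Actually the intended reading is likely that ``of order type $\nu$'' is meant up to the coarser equivalence, or the authors genuinely mean the regular cardinal $\nu$; I will phrase the conclusion as: $D$ is well ordered, has no last element, and $\cf(D)=\nu$, and since by purity $D$ is ``continuously cofinal in type $\nu$'' we may take $K$ to be a cofinal sub-chain of order type $\nu$ and re-sum the blocks accordingly, giving $P\cong\sum_{a\in K}P'_a$ with $\mathrm{ot}(K)=\nu$.

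\textbf{Main obstacle.} The delicate point is exactly the last one: passing from ``$D$ is a well-ordered chain with $\cf(D)=\nu$'' to a presentation over a chain $K$ \emph{of order type $\nu$}. One must use purity (not merely cofinality) to control the order type — purity says every proper initial segment is strictly bounded, which rules out $D$ having a cofinal segment of smaller type sitting on top of an initial chunk, and lets us either conclude $\mathrm{ot}(D)=\nu$ directly (if that is the intended claim, using that $\nu$ is a cardinal and $D$ is ``$\nu$-like'') or re-block $P$ over a genuinely order-type-$\nu$ cofinal chain. Getting this bookkeeping right, and correctly invoking that a chain all of whose proper initial segments are strictly bounded above is well ordered, are the two steps I would write out most carefully; everything else is routine manipulation of lexicographical sums and cofinality.
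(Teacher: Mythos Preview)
First, note that the paper does not prove this theorem: it is introduced with ``We recall'' and is Theorem~24 of \cite{assous-pouzet}. So there is no in-paper proof to compare against, and your proposal stands or falls on its own. Your ``if'' direction is fine once tidied up; the ``only if'' direction contains a genuine error and, even after your own attempted repair, a genuine gap.

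\textbf{The error.} The claim that \emph{a chain in which every proper initial segment is strictly bounded above is well ordered} is simply false. Already $\Q$ refutes it: every proper initial segment of $\Q$ has a strict upper bound, yet $\Q$ is not well ordered. For an example with uncountable cofinality take the long chain $L:=\sum_{\alpha<\omega_1}[0,1)$: each proper initial segment is dominated by any point of a later block, $\cf(L)=\omega_1$, and $L$ is not well ordered. Your sketched justification breaks exactly where you write ``a strict upper bound of $I$ would have to be a least element of $S$ or lie in $I$'': with $S=\{1/n:n\geq 1\}\subseteq\Q$ one gets $I=(-\infty,1]$, and $e=2$ is a strict upper bound which is neither.

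\textbf{The gap.} Your fallback --- take a cofinal subchain $K\subseteq D$ of order type $\nu$ and re-sum the blocks --- does work \emph{provided} the chain $D$ of connected components of $\Inc(P)$ has no greatest element (then indeed $\cf(D)=\cf(P)=\nu$ and the re-blocking is routine). But nothing in your argument excludes a maximum of $D$. That occurs exactly when some connected component of $\Inc(P)$ sits above all the others; in the extreme case $\Inc(P)$ is connected, $D$ is a single point, and the folklore decomposition gives you nothing at all. To finish along your lines you would have to prove, independently of the theorem, that a pure poset of uncountable cofinality cannot have connected incomparability graph (equivalently, that $D$ has no top element). This is the substantive missing step, and it is not supplied by any of the observations you make about purity of $D$.
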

Our first lemma is the following:
\begin{lemma}\label{lem:cofinalchain} Let $\nu:= \kappa^+$ be an infinite  successor cardinal and $P$ be a poset with no infinite antichain. If $\Cov(P) \geq \nu$ and $\Cov(P\setminus \uparrow x)<\nu$ for every $x\in P$ then there is a chain $C$ cofinal in $P$  with well ordered  order type $\lambda\geq \nu$ and for every $x\in P$, $\uparrow x$ is impure.  
\end{lemma}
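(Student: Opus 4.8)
The goal is to produce a well-ordered cofinal chain $C$ in $P$ of order type $\lambda \geq \nu$ and to show that $\uparrow x$ is impure for every $x \in P$. The natural first move is to invoke Theorem \ref{cofinalthm}: $P$ has a cofinal subset isomorphic to a finite direct sum $A_1 \bigoplus \dots \bigoplus A_k$ of finite products of regular distinct ordinals. Since $\Cov(P) \geq \nu$ while $\Cov(P \setminus \uparrow x) < \nu$ for every $x$, the poset $P$ has no maximal element, so each $A_i$ is infinite; and by Fact-type reasoning (as in the proof of part (2)), no subset of cardinality $< \cf(P)$ is cofinal, so $\cf(P)$ is uncountable. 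I would first pin down $\cf(P)$: because $P \setminus \uparrow x$ is never cofinal and $\Cov$ of it is $< \nu$, the argument of Facts \ref{fact1}–\ref{fact2} shows $\cf(P) \geq \cf(\nu) = \nu$ (here $\nu$ is regular, being a successor), and conversely one direction of the Erd\H{o}s–Dushnik–Miller theorem together with the no-infinite-antichain hypothesis bounds $\cf(P)$, so in fact $\cf(P) = \lambda$ for some regular $\lambda \geq \nu$.

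\textbf{Extracting the cofinal well-ordered chain.} Among the summands $A_i$ of the cofinal set, at least one, say $A_{i_0} = \beta_1 \times \dots \times \beta_n$, must itself be cofinal-in-a-large-piece, i.e. must carry covering number $\geq \nu$ and have cofinality $\lambda$ (the sum being a finite disjoint union, its cofinality is the max of the cofinalities of the pieces and its covering number is the max of theirs, by the Corollary following Lemma 2.x). A finite product $\beta_1 \times \dots \times \beta_n$ of regular ordinals has cofinality equal to $\max_j \cf(\beta_j) = \max_j \beta_j$; relabel so that $\beta_n$ is this maximum, so $\beta_n = \lambda$. Then the "diagonal-in-the-last-coordinate" subset, or more simply the subset $\{x_\alpha : \alpha < \lambda\}$ obtained by fixing the first $n-1$ coordinates at a common upper bound (available since each $\beta_j$ with $j<n$ is a regular ordinal $\le \lambda$, and a cofinal chain can be threaded using Fact \ref{fact1}-style boundedness), yields a chain of order type $\lambda$ cofinal in $A_{i_0}$, hence cofinal in $P$. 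This gives $C$.

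\textbf{Showing $\uparrow x$ is impure.} Fix $x \in P$ and suppose toward a contradiction that $Q := P_{\restriction \uparrow x}$ is pure. First note $\Cov(Q) \geq \nu$: indeed $P = \downarrow x \cup \uparrow x \cup \inc_x(P)$, and since the present Lemma's hypothesis is exactly that of Theorem \ref{thm:covering2}(1), one knows (via Claim \ref{claim1}-type reductions, or more directly because $\Cov(P \setminus \uparrow x) < \nu$) that $\Cov(\uparrow x) \geq \nu$. Also $\cf(Q)$ is uncountable: $Q$ inherits from $P$ the property that $Q \setminus \uparrow y$ has covering $< \nu$ for $y \in Q$ (because $Q \setminus \uparrow y \subseteq P \setminus \uparrow y$... in fact we need $Q \setminus \uparrow y \subseteq (P\setminus\uparrow y)$, which is clear), so $\cf(Q) \geq \nu$ by the same Fact \ref{fact1}–\ref{fact2} argument. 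Now apply Theorem \ref{theo:pure} with $\cf(Q) =: \mu \geq \nu$ uncountable: purity of $Q$ forces $Q = \sum_{a \in K} Q_a$ with $K$ a chain of order type $\mu$. But then $\Cov(Q) = \bigvee_{a \in K} \Cov(Q_a)$ unless some single $Q_a$ already has $\Cov(Q_a) \geq \nu$; in the latter case $Q_a = Q \setminus \uparrow y$ for a suitable $y$ (the least element of the next block), contradicting $\Cov(Q \setminus \uparrow y) < \nu$; and in the former case, writing $K$ as a $\mu$-chain, each proper initial segment gives a $Q \setminus \uparrow y$ of covering $< \nu$, so for $\Cov(Q) \geq \nu$ we would need $\mu = \cf(\nu)$-many blocks each contributing, i.e. $\nu$ would be a limit of the block-covering-numbers — but $\nu = \kappa^+$ is a successor, so some block already has covering $\geq \nu$, and we are back in the contradictory case. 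Hence $Q$ is impure.

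\textbf{Main obstacle.} The delicate point is the bookkeeping in the last paragraph: correctly deducing, from purity of $\uparrow x$ and regularity of $\nu = \kappa^+$, that one of the lexicographic blocks $Q_a$ must carry the full covering number $\nu$, and then identifying that block (up to an initial segment) with a set of the form $Q \setminus \uparrow y$ so as to contradict the hypothesis $\Cov(P \setminus \uparrow x) < \nu$. One must be careful that $Q\setminus \uparrow y$ for $y\in Q$ is genuinely a subset of $P\setminus\uparrow y$ (it is, since $\uparrow_Q y \subseteq \uparrow_P y$) and that the initial-segment/block structure from Theorem \ref{theo:pure} lines up with "removing an up-set", which it does because in a lexicographic sum over a chain $K$ the up-set of any point contains all blocks strictly above it. The rest is the standard cofinality arithmetic for finite products of regular ordinals and the disjoint-sum formula for $\Cov$, both already available in the excerpt.
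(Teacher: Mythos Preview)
Your impurity argument (Part 2) is essentially the paper's, just phrased more circuitously. The paper observes directly that in the lexicographic decomposition $\uparrow x=\sum_{a\in K}P_a$, each block $P_a$ sits inside $P\setminus\uparrow y$ for any $y$ in a later block, hence $\Cov(P_a)\le\kappa$; concatenating chains across blocks then gives $\Cov(\uparrow x)\le\kappa$ and the contradiction. Your case split via the Corollary and the claim ``$Q_a=Q\setminus\uparrow y$'' (it is only $\subseteq$, and the ``least element of the next block'' need not exist) are unnecessary detours, but the core is right.

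The cofinal-chain part, however, has a genuine gap and an error. First, the assertion that some summand $A_{i_0}$ ``carries covering number $\ge\nu$'' is false: a product of ordinals can have covering number $1$ (a single ordinal) or small in general, and $\Cov$ of the cofinal subset $A$ says nothing about $\Cov(P)$. What you need is that some $\downarrow A_{i_0}$ has $\Cov\ge\nu$, and then the hypothesis $\Cov(P\setminus\uparrow y)<\nu$ forces $\downarrow A_{i_0}=P$ (pick $y\notin\downarrow A_{i_0}$ and note $\downarrow A_{i_0}\subseteq P\setminus\uparrow y$). Without this step your chain is cofinal only in $A_{i_0}$, not in $P$. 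Second, ``fixing the first $n-1$ coordinates at a common upper bound'' is impossible: each $\beta_j$ is a regular (hence limit) ordinal and has no upper bound inside itself. A diagonal $(f_1(\alpha),\dots,f_{n-1}(\alpha),\alpha)_{\alpha<\lambda}$ with each $f_j:\lambda\to\beta_j$ nondecreasing and cofinal would work, but that is not what you wrote.

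For comparison, the paper does \emph{not} try to thread a chain through a product at all. It first reduces to $k=1$ exactly as above, then proves the stronger structural fact that the product is actually a single ordinal: every factor $\beta_{j,1}$ must be $\ge\nu$ (otherwise the $j$-th axis, having size $\le\kappa$, is unbounded and $P=\bigcup_\alpha(P\setminus\uparrow x_\alpha)$ has $\Cov\le\kappa$), and if there were two factors then $\nu\times\nu^+$ would embed in $P$, giving an $x$ with $\Cov(P\setminus\uparrow x)\ge\nu$. So $n_1=1$ and $A_1$ \emph{is} the desired chain. Your route is salvageable and arguably shorter once patched, but it trades away this structural information.
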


\begin{proof}Pick a cofinal subset as in Theorem \ref{cofinalthm}. Since $\Cov (P)\geq \nu$ then $\Cov (\downarrow A_i)\geq \nu$ for some $i$, with $1\leq i\leq k$. But,  since  $\Cov (\uparrow x)<\nu$ for   every $x\in P$, $\downarrow A_i= P$. This implies $k=1$.  We prove next that $A_1$ is a chain.
\begin{claim}\label {claimordinal} 
 Each ordinal $\beta_{j,1}$, for $1\leq j\leq n_1$ is at least $\nu$. 
 \end{claim}
 \noindent{\bf Proof of Claim \ref{claimordinal}.} 
 Suppose that $\beta_{j,1}<\nu$. Since $\nu$ is a successor cardinal and $\beta_{j,1}$ is a  cardinal, $\beta_{j,1} \leq \kappa$, the predecessor of $\nu$. Let $X_j:= \{x_{\alpha}: \alpha<\beta_{j,1}\}$ be the image in $P$ of the chain made of the elements of $A_1$ of the form $(z_{k})_{k=1, \dots, n_1}$ where $z_{k}=0$ for $k\not =j$ and $z_k<\beta_{j,1}$ otherwise. Since $\Cov( P\setminus \uparrow x_{\alpha}) \leq \kappa$ and $\beta_{j, 1}\leq \kappa$,  
 $\Cov( \bigcup_{\alpha<\beta_{j,1}} (P\setminus \uparrow x_{\alpha})) \leq \kappa$.  This union is $P\setminus \bigcap_{\alpha<\beta_{j,1}}\uparrow x_{\alpha}$. That is $P$ since in $P$ the chain $X_j$ is unbounded. Contradicting $\Cov (P) \geq \kappa^+$. \hfill $\Box$

 Since in $A_1$ all ordinals $\beta_{j, i}$ are distincts, if there are two, then $\nu \times \nu^+$ embeds in $A_1$.  For $u:= (0, \nu)\in Q:= \nu\times \nu^+$,   $\Cov (Q\setminus \uparrow u)= \nu$ since that set contains $\nu\times \nu$ hence $\Cov (P\setminus \uparrow x)\geq \nu$ for some $x\in P$ contradicting our hypothesis. Consequently, there is just one ordinal, namely $\beta_{1,1}$, hence $P$ has a cofinal chain of type at least $\nu$.

In order to complete the proof of the lemma, suppose by way of contradiction that  $\uparrow x$ is pure for some $x\in P$. According to Theorem \ref{theo:pure}, the final segment  $\uparrow x$ is   a lexicographical  sum $\sum_{a\in K }P_{a}$ where $K$  is  a chain with  cofinality $cf(P)$.
For each $a\in K$, 
 $\Cov(P_{a}) \leq \kappa$, from which follows that  $\Cov(\uparrow x) \leq \kappa$. Since $\Cov (P\setminus \uparrow x)\leq \kappa$, we have $\Cov(P) \leq \kappa$. This contradicts our hypothesis that $\Cov(P)= \nu=\kappa^+$. 
\end{proof} 

\begin{lemma}\label{lem:secondcofinalchain} Let $\lambda$ be an uncountable regular cardinal, let $P$ be a poset with no infinite antichain, having  a cofinal chain $C$ of type $\lambda$, and such that $\uparrow x$ is impure for each $x\in P$. Then there is a chain (for set inclusion)  $\mathfrak I$ of type $\lambda$ made of unbounded ideals of $P$, each one containing a cofinal chain of type $\lambda$. 
\end{lemma}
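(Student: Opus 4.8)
\textbf{Plan for the proof of Lemma \ref{lem:secondcofinalchain}.}
The idea is to build the chain $\mathfrak I$ of ideals by transfinite recursion of length $\lambda$, using the impurity of the final segments $\uparrow x$ at each successor step to ``jump'' past a non-cofinal initial segment. First I would fix an enumeration $C = \{c_\xi : \xi < \lambda\}$ of the cofinal chain, increasing in the order of $P$ (this is possible since $C$ has order type $\lambda$). We construct an increasing sequence of ideals $I_\alpha$ ($\alpha < \lambda$), each unbounded in $P$ and each containing a cofinal chain of type $\lambda$, with $I_\alpha \subsetneq I_{\alpha+1}$ strictly, and $I_\alpha \supseteq \{c_\xi : \xi < \alpha\}$ to guarantee that the union of the $I_\alpha$'s is all of $P$ and that each $I_\alpha$ is unbounded (it contains a tail of $C$... actually it contains an unbounded subchain of $C$, which suffices). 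Here by an \emph{ideal} I mean a nonempty down-closed up-directed subset; one should check that in a poset with no infinite antichain, $\downarrow x$ for a single $x$ is an ideal, and more generally $\downarrow D$ for $D$ directed is an ideal, so all the sets produced below genuinely are ideals.

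The recursion runs as follows. At stage $0$, set $I_0 := \downarrow c_0$. At a limit stage $\delta < \lambda$, set $I_\delta := \bigcup_{\alpha < \delta} I_\alpha$: this is an ideal (an increasing union of ideals, directedness being preserved since $\cf(\delta) \le \delta < \lambda$ and one can absorb finitely many elements into a later $I_\alpha$), it is unbounded since each $I_\alpha$ is, and it contains a cofinal chain of type $\lambda$ since, e.g., $I_0 \subseteq I_\delta$ already would be too weak — instead I argue that $I_\delta$ contains $\{c_\xi : \xi < \delta\}$ plus whatever was added, and then extend: since $I_\delta$ is not cofinal in $P$ (we will maintain this at every stage $\alpha < \lambda$, which is where the cardinality count $\lambda = \cf(\lambda) > $ the covering numbers of the $P \setminus \uparrow x$ is used, together with the fact that $P$ itself is not covered by $\lambda$-many such sets — wait, more carefully: $I_\delta$ cofinal would make $\downarrow I_\delta = P$, but $I_\delta = \bigcup_{\alpha<\delta} \downarrow(\text{finite sets})$... the cleanest route is to maintain the invariant $I_\alpha \subseteq \downarrow\{c_\xi : \xi < f(\alpha)\}$ for some $f(\alpha) < \lambda$, so $I_\alpha$ is never cofinal). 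At a successor stage, given $I_\alpha$ non-cofinal, pick any $x \in P$ with $x \notin \downarrow I_\alpha$; actually pick $x$ on the chain $C$ above everything in $I_\alpha \cap C$. Now $\uparrow x$ is impure, so it has a proper initial segment $J$ (proper as an initial segment of $\uparrow x$) that is \emph{not} strictly bounded above in $\uparrow x$, equivalently a non-cofinal subset of $\uparrow x$ that is not strictly bounded above in $\uparrow x$; use $J$ together with $I_\alpha \cup \{x\}$ to define $I_{\alpha+1} := \downarrow(I_\alpha \cup (\uparrow x \text{-part}))$ arranged to be a proper extension that is still non-cofinal in $P$ but genuinely larger. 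The role of impurity is precisely to furnish, above each point, a ``long'' piece that is unbounded in $\uparrow x$ yet still leaves room, so that the increasing sequence $(I_\alpha)$ is strictly increasing $\lambda$ times and each term carries a type-$\lambda$ cofinal chain.

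To guarantee each $I_\alpha$ \emph{contains a cofinal chain of type $\lambda$}: since $P$ has no infinite antichain and $I_\alpha$ is an unbounded ideal, $I_\alpha$ has no infinite antichain either, so by Theorem \ref{cofinalthm} it has a cofinal subset that is a finite direct sum of finite products of regular ordinals; because $I_\alpha$ is directed it cannot have a nontrivial direct sum in a cofinal set, so the cofinal subset is a single finite product $\beta_1 \times \dots \times \beta_n$ of distinct regular ordinals, and arguing as in Claim \ref{claimordinal} (using that $I_\alpha$ is unbounded and $C$ is cofinal of type $\lambda$, forcing $\cf(I_\alpha) \ge \lambda$) one gets that this product reduces to a single ordinal of cofinality $\lambda$, i.e. $I_\alpha$ has a cofinal chain of type $\lambda$. \textbf{The main obstacle} I anticipate is the bookkeeping at successor steps: one must simultaneously (i) make $I_{\alpha+1}$ strictly contain $I_\alpha$, (ii) keep it an ideal, (iii) keep it unbounded and type-$\lambda$-cofinal, and (iv) keep it non-cofinal in $P$ so the recursion does not terminate early — and it is the interaction of impurity of $\uparrow x$ with the global non-cofinality constraint that requires care, likely invoking Fact-\ref{fact1}-style counting ($<\lambda$ many non-cofinal pieces cannot cover $P$) to see that after $\lambda$ strict steps we have in fact exhausted $P$ and that no stage is cofinal before then. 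The final chain $\mathfrak I := \{I_\alpha : \alpha < \lambda\}$ is then as required.
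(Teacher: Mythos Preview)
Your recursion has a built-in contradiction: you want each $I_\alpha$ to be an \emph{unbounded} ideal of $P$, yet you also propose the invariant $I_\alpha \subseteq \downarrow\{c_\xi : \xi < f(\alpha)\}$ for some $f(\alpha)<\lambda$. Since $C$ is a chain, the latter gives $I_\alpha \subseteq \downarrow c_{f(\alpha)}$, i.e.\ $I_\alpha$ is bounded. (Your base case $I_0=\downarrow c_0$ already exhibits this.) The point is that the ideals in the statement must be simultaneously unbounded and non-cofinal, and no bookkeeping along the chain $C$ will certify both at once. The successor step is also where the real content lies, and it is not filled in: from impurity of $\uparrow x$ you get a proper initial segment $J$ of $\uparrow x$ that is unbounded there, but you neither show that the ideal generated by $I_\alpha\cup J$ stays non-cofinal in $P$, nor that it (or $\downarrow J$) has a cofinal chain of type $\lambda$. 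Your proposed fix for the latter --- ``argue as in Claim~\ref{claimordinal}'' --- does not apply: that claim uses the covering hypothesis $\Cov(P\setminus\uparrow x)<\nu$, which is \emph{not} among the hypotheses of Lemma~\ref{lem:secondcofinalchain}. Knowing $\cf(I_\alpha)\ge\lambda$ only tells you the largest factor in the cofinal product from Theorem~\ref{cofinalthm} is $\ge\lambda$; it does not rule out a second factor, and a product like $\omega\times\lambda$ has no cofinal chain at all.

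The paper avoids both difficulties by working globally rather than step by step. It considers the poset $\mathfrak I_P(\lambda)^-$ of all unbounded proper ideals that \emph{already} carry a cofinal chain of type $\lambda$, and proves two structural facts: (i) $\cf(\mathfrak I_P(\lambda)^-)\ge\lambda$, and (ii) $\mathfrak I_P(\lambda)^-$ has no infinite antichain. For (i), given $x$, impurity of $\uparrow x$ yields an unbounded proper initial segment $I$ of $\uparrow x$; then for each $y$ in the tail $C_x:=C\cap\uparrow x$ one picks $c_y\in I\setminus\downarrow y$ and applies Erd\H os--Dushnik--Miller (Theorem~\ref{thm:EDM}) to the family $(c_y)_{y\in C_x}$ to extract an increasing subsequence of type $\lambda$ --- this is exactly the missing mechanism that manufactures a type-$\lambda$ cofinal chain inside the new ideal. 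For (ii) one uses that $\lambda$ is regular uncountable. A well-founded cofinal subset of $\mathfrak I_P(\lambda)^-$ is then a well-quasi-order of size $\ge\lambda$, so contains a chain of type $\lambda$. If you want to salvage your direct-recursion approach, the EDM step is the ingredient you are missing at successor stages, and you would still need a separate argument (analogous to the no-infinite-antichain claim) to keep the recursion from stalling by producing incomparable ideals.
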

\begin{proof} Let $\mathfrak {I}_P(\lambda)$ be the set of unbounded ideals of $P$ containing a cofinal chain of type $\lambda$. By hypothesis, $P\in \mathfrak {I}_P(\lambda)$. Let $\mathfrak{I}_P(\lambda)^{-}:= \mathfrak{I}_P(\lambda)\setminus \{P\}$. We order $\mathfrak{I}_P(\lambda)^{-}$ by set inclusion. 

\begin{claim}\label{claim:cof}
$\cf(\mathfrak{I}_P(\lambda)^{-})\geq \lambda$. 
\end{claim} 
\noindent{ \bf Proof of Claim \ref{claim:cof}.} Let $(J_{\alpha})_{\alpha<\mu}$ with $\mu:=\cf(\mathfrak{I}_P(\lambda)^{-})$ be a  sequence cofinal in $\mathfrak{I}_P(\lambda)^{-}$.For each $\alpha<\mu$, pick $x_{\alpha} \in C\setminus J_{\alpha}$. If $\mu<\lambda$ then,  since $\lambda$ is a  regular cardinal, there is some $x\in C$ majorizing all the $x_{\alpha}'s$. In order to get a contradiction, it suffices to show that there is some $J$ such that $x\in J\in \mathfrak{I}_P(\lambda)^{-}$. According tyo our hypothesis, the final segment $\uparrow x$ is impure. Hence, it contains a proper initial  segment $I$ which is unbounded (in $\uparrow x$). Let $C_x:= \uparrow x\cap C$. For each $y\in C_x$, we may pick some $ c_y\in I\setminus \downarrow y$ since $y$ does not majorize $I$.  Apply Erd\"os-Dushnik-Miller's Theorem (Theorem \ref{thm:EDM}) to the sequence $(c_y)_{y\in C_x}$. Since $P$ has no infinite  antichain, there is an increasing subsequence  
 $(c_y)_{y\in D}$ with $D$ cofinal in $C_x$. The values of this sequence cannot be bounded in $P$. (otherwise, if $a$ is an upper bound, pick $y\geq a$ in $D$ and get
a  contradiction with $c_y$) hence they generate an unbounded ideal; since it is include in $\downarrow I$ it is distinct of $P$. \hfill $\Box$

\begin{claim}\label{claim:infinite antichain}
$\mathfrak{I}_P(\lambda)^{-}$ has no infinite antichain. 
\end{claim} 
\noindent{ \bf Proof of Claim \ref{claim:infinite antichain}.} This relies on the fact that $\lambda$ is uncountable and regular. Indeed, suppose by way of contradiction that $\mathfrak{I}_P(\lambda)^{-}$ contains an infinite antichain $(J_n)_{n<\omega}$. For each $n\not =m<\omega$ there is some $x_{n,m}\in I_n \setminus J_m$. Since $I_n$ contains a cofinal chain of regular type distinct from $\omega$, it contains an element $x_n$ majorizing all the $x_{n, m}$. Then $\{x_n: n<\omega\}$ is an infinite antichain in $P$. Impossible. \hfill $\Box$ 

 With that in hands, we complete the proof of Lemma \ref {lem:secondcofinalchain} as follows. We pick a well founded cofinal subset in 
 $\mathfrak{I}_P(\lambda)^{-}$. It is well quasi-ordered by Claim \ref{claim:infinite antichain} and, by  Claim \ref{claim:cof},  has cardinality at least $\lambda$. By Erd\"os-Dushnik-Miller's  theorem, or more simply Konig's  lemma, it contains a chain of type $\lambda$. 
\end{proof}

\begin{lemma}\label{lem:thirdcofinalchain} Let $\lambda$ be a regular cardinal and $P$ be a poset. If $P$ is the union of a chain 
$(J_{\alpha})_{\alpha<\lambda}$ of unbounded ideals,  each containing a cofinal chain of type $\lambda$, then $P$ contains a cofinal subset isomorphic to $[\lambda]^2$.
\end{lemma}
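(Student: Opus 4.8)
The plan is to construct directly inside $P$ a copy of $[\lambda]^2$, written $x_{\alpha,\beta}$ for $\alpha<\beta<\lambda$, organised so that for each $\alpha$ the ``column'' $\{x_{\alpha,\beta}:\alpha<\beta<\lambda\}$ is a cofinal chain of the ideal $J_\alpha$ and the second coordinate $\beta$ marks the position along it. The comparabilities $x_{\alpha,\beta}<x_{\alpha',\beta'}$ for $(\alpha,\beta)\le(\alpha',\beta')$ with $\alpha\le\alpha'$ will be secured by pushing elements upward; the incomparabilities of $[\lambda]^2$ (the pairs with $\alpha<\alpha'$ and $\beta>\beta'$) will be witnessed by the nesting of the ideals; and cofinality of the copy in $P$ will be automatic, since the columns exhaust $\bigcup_\alpha J_\alpha=P$.

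First I would replace $(J_\alpha)_{\alpha<\lambda}$ by a cofinal subsequence that is discontinuous at limits: taking the $\xi$-th index to strictly exceed the supremum of the previous ones gives a subsequence, still of type $\lambda$ with union $P$, for which $J_{<\alpha}:=\bigcup_{\beta<\alpha}J_\beta\subsetneq J_\alpha$ for every $\alpha<\lambda$. Since each $J_\alpha$ is unbounded in $P$ it has no maximum and cofinality $\lambda$. I would then record two facts: (a) in a directed poset $Q$ the complement of a non-cofinal initial segment is again cofinal, up-closed and directed; (b) if moreover $Q$ has a cofinal chain of type $\lambda$, then so does that complement (built by recursion on $\lambda$, using at limit stages that a chain of length $<\lambda$ in $Q$ is dominated by an element of the given cofinal $\lambda$-chain because $\lambda$ is regular, and then pushing up into the complement). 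Applying (a)--(b) to $Q=J_\alpha$ with the proper, hence non-cofinal, initial segment $J_{<\alpha}$, I fix for each $\alpha$ a cofinal chain $K_\alpha=(c_\alpha(\xi))_{\xi<\lambda}$ of $J_\alpha$ contained in $J_\alpha\setminus J_{<\alpha}$; in particular the $K_\alpha$ are pairwise disjoint and $K_{\alpha'}\cap J_\alpha=\emptyset$ whenever $\alpha<\alpha'$.

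Then I would define $x_{\alpha,\beta}:=c_\alpha(g(\alpha,\beta))$ by recursion on $\beta<\lambda$, and within row $\beta$ on $\alpha<\beta$, choosing $g(\alpha,\beta)<\lambda$ large enough that (i) $c_\alpha(g(\alpha,\beta))>x_{\alpha',\beta'}$ for every already defined $x_{\alpha',\beta'}$ with $(\alpha',\beta')\le(\alpha,\beta)$ componentwise and $(\alpha',\beta')\neq(\alpha,\beta)$ (so in particular $g(\alpha,\cdot)$ is strictly increasing), and (ii) $c_\alpha(g(\alpha,\beta))\not\le x_{\alpha',\beta'}$ for every already defined $x_{\alpha',\beta'}$ with $\alpha'>\alpha$ and $\beta'<\beta$. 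For (i): the $x_{\alpha',\beta'}$ in question, being fewer than $\lambda$ and having first coordinate $\le\alpha$, all lie in $J_\alpha$, in which $K_\alpha$ is cofinal. For (ii): for each such $x_{\alpha',\beta'}$ the set $\{\xi<\lambda:c_\alpha(\xi)\le x_{\alpha',\beta'}\}$ is a proper initial segment of $\lambda$ --- were it all of $\lambda$, $x_{\alpha',\beta'}$ would bound $K_\alpha$ and hence $J_\alpha$, contradicting unboundedness of $J_\alpha$ in $P$ --- so since $\lambda$ is regular a common tail of indices avoids all of them; together with (i) a single $g(\alpha,\beta)<\lambda$ works.

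It then remains to verify that $(\alpha,\beta)\mapsto x_{\alpha,\beta}$ is an isomorphism onto its image $X$ and that $X$ is cofinal in $P$. Injectivity and the relations inside a single column are clear; $x_{\alpha,\beta}<x_{\alpha',\beta'}$ for $(\alpha,\beta)\le(\alpha',\beta')$ with $\alpha\le\alpha'$ holds by (i); and whenever $\alpha<\alpha'$ one has $x_{\alpha,\beta}\not\ge x_{\alpha',\beta'}$ since $x_{\alpha',\beta'}\in K_{\alpha'}$ is not in the down-closed set $J_\alpha$ which contains $x_{\alpha,\beta}$, while if also $\beta>\beta'$ then $x_{\alpha,\beta}\not\le x_{\alpha',\beta'}$ by (ii); this matches exactly the incomparable pairs of $[\lambda]^2$. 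Finally $X$ is cofinal in $P$ because the column $\{x_{\alpha,\beta}:\alpha<\beta<\lambda\}$ has index set $\{g(\alpha,\beta):\alpha<\beta<\lambda\}$ cofinal in $\lambda$ (as $g(\alpha,\cdot)$ is strictly increasing), hence is cofinal in $K_\alpha$ and so in $J_\alpha$, and $\bigcup_{\alpha<\lambda}J_\alpha=P$. The delicate point, where the hypotheses are really used, is the simultaneous requirement (i)$+$(ii): placing a new element of the prescribed column $K_\alpha$ above a small down-set of already placed points yet outside a small up-set of others. This is made possible precisely by the discontinuity arranged at the start --- which turns each $J_{<\alpha}$ into a non-cofinal initial segment of $J_\alpha$, so the $K_\alpha$ can be taken disjoint and ``stacked'' across the ideals (giving all cross-column non-comparabilities of the first kind for free) --- and by the unboundedness of every $J_\alpha$, which forbids any single element from dominating a tail of $K_\alpha$ and so makes (ii) achievable.
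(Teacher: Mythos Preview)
Your proof is correct and follows essentially the same approach as the paper: pass to a strictly increasing subchain so that the strips $\check J_\alpha=J_\alpha\setminus\bigcup_{\alpha'<\alpha}J_{\alpha'}$ are nonempty, then build $x_{\alpha,\beta}\in\check J_\alpha$ by recursion on the lexicographic order with $\beta$ dominant, enforcing the upward comparabilities directly and getting one direction of each incomparability for free from the nesting of the ideals. The paper's proof is terser and does not separate out your auxiliary chains $K_\alpha\subseteq\check J_\alpha$, but this is only a matter of bookkeeping; your extra step makes the recursion cleaner and also lets you verify explicitly that the image is cofinal in $P$, a point the paper's proof leaves implicit.
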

\begin{proof} We may suppose that $\check J_{\alpha} :=  J_{\alpha}\setminus \bigcup_{\alpha'<\alpha} J_{\alpha'}\not = \emptyset$ for everty $\alpha<\lambda$. We   define an embedding  from $[\lambda]^2$ in $P$ such that $f(\alpha, \beta)\in \check J_{\alpha}$ for every $\alpha<\beta<\lambda$. Pick $f(0,1)$ arbitrary in $J_0$. Next,  order $[\lambda]^2$ lexicographically according to the second difference, that is $(\alpha' \beta')\prec (\alpha, \beta)$ if either $\beta' <\beta$ or $\beta= \beta'$ and $\alpha'<\alpha$. Let $(\alpha, \beta)$. Suppose $f$ be defined for all $(\alpha', \beta')\prec (\alpha, \beta)$. We extend it to $(\alpha, \beta)$. For that, suffices to show that $\check J_{\alpha}$ contains some $x_{\alpha, \beta}$ larger that   $f(\alpha, \beta' )$ for all $\alpha <\beta'<\beta$ and larger than  $f(\alpha', \beta )$ for all $\alpha'<\beta$.  \\Case 1.  $\alpha=0$. Since $J_0$ contains  a cofinal chain, one may find $x_{0, \beta}\in J_0$  with $f(0, \beta') < x_{0, \beta}$ and $x_{0, \beta}\not \leq f (\alpha', \beta')$ for $\alpha'<\beta'\leq \beta$.\\ Case 2. $\alpha>0$. One may find $x_{\alpha, \beta}\in \check J_{\alpha}$  with $f(\alpha', \beta') < x_{\alpha , \beta}$ for $\alpha'<\beta'\leq \beta$   and $x_{\alpha, \beta}\not \leq f (\alpha', \beta')$ for $\alpha'<\beta'\leq \beta$.  One check that the map $f$ is an embedding. 
\end{proof} 

To deduce the proof of $(1)$ we proceed as follows. Let $P$ be as in Theorem \ref{thm:covering2} and $\nu:= \kappa^+$. Apply successively Lemmas \ref {lem:cofinalchain}, \ref{lem:secondcofinalchain} and \ref{lem:thirdcofinalchain}, get a copy of $[\lambda]^2$ where $\lambda:= \cf(P)$. Since $\lambda\geq \nu$, this yields  the required conclusion.


\begin{thebibliography}{10}

\bibitem{abraham}
Uri Abraham.
\newblock A note on dilworth's theorem in the infinite case.
\newblock {\em Order}, 4(2):107--125, 1987.

\bibitem{assous-pouzet}
Roland Assous and Maurice Pouzet.
\newblock J\'{o}nsson posets.
\newblock {\em Algebra Universalis}, 79(3):Paper No. 74, 26, 2018.

\bibitem{baumgartner}
James~E. Baumgartner.
\newblock Order types of real numbers and other uncountable orderings.
\newblock In {\em Ordered sets ({B}anff, {A}lta., 1981)}, volume~83 of {\em
  NATO Adv. Study Inst. Ser. C: Math. Phys. Sci.}, pages 239--277. Reidel,
  Dordrecht-Boston, Mass., 1982.

\bibitem{dilworth}
Robert~P. Dilworth.
\newblock A decomposition theorem for partially ordered sets.
\newblock {\em Ann. of Math. (2)}, 51:161--166, 1950.

\bibitem{dorais2}
Fran\c{c}ois~G. Dorais.
\newblock Another note on dilworth's theorem in the infinite case.
\newblock page 7pp., August 17, 2008.

\bibitem{dushnik-miller}
Ben Dushnik and Edwin~W. Miller.
\newblock Partially ordered sets.
\newblock {\em Amer. J. Math.}, 63:600--610, 1941.

\bibitem{fraisse}
Roland Fra\"{\i}ss\'{e}.
\newblock {\em Theory of relations}, volume 145 of {\em Studies in Logic and
  the Foundations of Mathematics}.
\newblock North-Holland Publishing Co., Amsterdam, revised edition, 2000.
\newblock With an appendix by Norbert Sauer.

\bibitem{milner-pouzet1}
Eric~C. Milner and Maurice Pouzet.
\newblock On the cofinality of partially ordered sets.
\newblock In {\em Ordered sets ({B}anff, {A}lta., 1981)}, volume~83 of {\em
  NATO Adv. Study Inst. Ser. C: Math. Phys. Sci.}, pages 279--298. Reidel,
  Dordrecht-Boston, Mass., 1982.

\bibitem{milner-pouzet2}
Eric~C. Milner and Maurice Pouzet.
\newblock Antichain decompositions of a partially ordered set.
\newblock In {\em Sets, graphs and numbers ({B}udapest, 1991)}, volume~60 of
  {\em Colloq. Math. Soc. J\'{a}nos Bolyai}, pages 469--498. North-Holland,
  Amsterdam, 1992.

\bibitem{milner-prikry}
Eric~C. Milner and Karel Prikry.
\newblock The cofinality of a partially ordered set.
\newblock {\em Proc. London Math. Soc. (3)}, 46(3):454--470, 1983.

\bibitem{moore}
Justin~Tatch Moore.
\newblock A five element basis for the uncountable linear orders.
\newblock {\em Ann. of Math. (2)}, 163(2):669--688, 2006.

\bibitem{pouzet78}
Maurice Pouzet.
\newblock Ensemble ordonn\'{e} universel recouvert par deux chaines.
\newblock {\em J. Combin. Theory Ser. B}, 25(1):1--25, 1978.

\bibitem{pouzet}
Maurice Pouzet.
\newblock Parties cofinales des ordres partiels ne contenant pas d'antichaine
  infinie.
\newblock {\em unpublished}.

\bibitem{rado}
Richard Rado.
\newblock Theorems on intervals of ordered sets.
\newblock {\em Discrete Math.}, 35:199--201, 1981.

\bibitem{todorcevic3}
Stevo Todorcevic.
\newblock Combinatorial dichotomies in set theory.
\newblock {\em Bull. Symbolic Logic}, 17(1):1--72, 2011.

\bibitem{todorcevic1}
Stevo Todor\v{c}evi\'{c}.
\newblock On a conjecture of {R}. {R}ado.
\newblock {\em J. London Math. Soc. (2)}, 27(1):1--8, 1983.

\bibitem{todorcevic2}
Stevo Todor\v{c}evi\'{c}.
\newblock Conjectures of {R}ado and {C}hang and cardinal arithmetic.
\newblock In {\em Finite and infinite combinatorics in sets and logic ({B}anff,
  {AB}, 1991)}, volume 411 of {\em NATO Adv. Sci. Inst. Ser. C: Math. Phys.
  Sci.}, pages 385--398. Kluwer Acad. Publ., Dordrecht, 1993.

\end{thebibliography}
\end{document}